\title{Model sets, almost periodic patterns, uniform density and linear maps}
\author{Pierre-Antoine Guihéneuf}
\address{Universidade Federal Fluminense, Instituto de Matemática e Estatística, Rua Mário Santos Braga, 24020-140 Niteroi, RJ, Brasil}
\email{pierre-antoine.guiheneuf@math.u-psud.fr}
\thanks{The author acknowledges Yves Meyer for his great help during this work, François Béguin and support from MESR (France) and CAPES (Brazil).}
\subjclass[2010]{52C23, 52C07, 11H06}
\newtheorem{lemme}{Lemma}
\newtheorem{theoreme}[lemme]{Theorem}
\newtheorem{prop}[lemme]{Proposition}
\newtheorem{coro}[lemme]{Corollary}
\newtheorem{theorem}{Theorem}
\theoremstyle{definition}
\newtheorem{definition}[lemme]{Definition}
\newtheorem{notation}[lemme]{Notation}
\theoremstyle{remark}
\newtheorem{rem}[lemme]{Remark}
\newcommand{\B}{\mathcal{B}}
\newcommand{\N}{\mathbf{N}}
\newcommand{\R}{\mathbf{R}}
\newcommand{\Q}{\mathbf{Q}}
\newcommand{\Z}{\mathbf{Z}}
\newcommand{\varep}{\varepsilon}
\newcommand{\Leb}{\operatorname{Leb}}
\newcommand{\card}{\operatorname{Card}}
\newcommand{\Id}{\operatorname{Id}}
\newcommand{\im}{\operatorname{im}}
\newcommand{\Vect}{\operatorname{span}}
\newcommand{\Vol}{\operatorname{Vol}}
\newenvironment{abstracts}{%
  \ifx\maketitle\relax
    \ClassWarning{\@classname}{Abstract should precede
      \protect\maketitle\space in AMS document classes; reported}%
  \fi
  \global\setbox\abstractbox=\vtop \bgroup
    \normalfont\Small
    \list{}{\labelwidth\z@
      \leftmargin3pc \rightmargin\leftmargin
      \listparindent\normalparindent \itemindent\z@
      \parsep\z@ \@plus\p@
      
      \itemsep\bigskipamount
    }%
}{%
  \endlist\egroup
  \ifx\@setabstract\relax \@setabstracta \fi
}
\newcommand{\abstractin}[1]{%
  \otherlanguage{#1}%
  \item[\hskip\labelsep\scshape\abstractname.]%
}
\begin{document}

\begin{abstracts}
\abstractin{english}
This article consists in two independent parts. In the first one, we investigate the geometric properties of almost periodicity of model sets (or cut-and-project sets, defined under the weakest hypotheses); in particular we show that they are almost periodic patterns and thus possess a uniform density. In the second part, we prove that the class of model sets and almost periodic patterns are stable under discretizations of linear maps.

\abstractin{french}
Cet article se compose de deux parties indépendantes. Dans la première, nous étudions les propriétés géométriques de presque périodicité des ensembles modèle (ou ensembles \emph{cut-and-project}, dans leur définition la plus générale); en particulier, on montre que ce sont des ensembles presque périodiques et que par conséquent ils admettent une densité uniforme. Dans la seconde partie, on montre que les classes d'ensembles modèle et d'ensembles presque périodiques sont stables par application de discrétisations d'applications linéaires.
\end{abstracts}

\selectlanguage{english}

\maketitle

\setcounter{tocdepth}{2}

\section*[Introduction]{Introduction}

The almost periodicity of a (discrete) set can be treated from various viewpoints. For example, one can consider the convolution of the Dirac measure on this set with some test function, and define the almost periodicity of the set by looking at the almost periodicity of the function resulting from this convolution (which can be, for example, Bohr almost periodic); see for example \cite{MR2869161, MR1798989, MR2876415}. Also, some considerations about harmonic analysis can lead to different notions of almost periodicity of discrete sets, see for instance \cite{MR1798989, MR2135448}.

In particular, model sets, introduced by Y.~Meyer in \cite{MR0485769}, give a lot of (and maybe, the most of) examples of quasicrystals. But the sets obtained by this cut-and-project method are of big interest not only for the study of quasicrystals (see \cite{MR791727}) but also in other various domains of mathematics (see for example \cite{Moody25}), like the theory of almost periodic tilings \cite{Baake}, harmonic analysis and number theory (with applications for example to Pisot and Salem numbers) \cite{MR0485769}, dynamics of substitution systems \cite{MR1633181}, analysis of computer roundoff errors \cite{Gui15a}, etc.

Here, we will consider the problem of almost periodicity from a geometric point of view: basically, a discrete set $\Gamma\subset\R^n$ will be said almost periodic in a certain sense if for every $\varep>0$ it has a ``big'' set of $\varep$-almost periods.

\begin{definition}\label{almperiod}
Consider a family of (pseudo-)distances $(D_R)_{R>0}$ on the discrete subsets of $\R^n$. A vector $v\in \R^n$ is an \emph{$\varep$-almost-period} for the set $\Gamma\subset\R^n$ if $\limsup_{R\to +\infty} D_R(\Gamma,\Gamma-v)<\varep$.
\end{definition}

Various geometric notions of almost periodicity then arise from various hypothesis made on almost periods. It depends on
\begin{enumerate}
\item\label{cond1} how we measure the size of the set of $\varep$-almost-periods: we can only ask it to be relatively dense (see Definition~\ref{reldense}), or require it to have some kind of almost periodicity;
\item\label{cond2} what (pseudo-)distances $D_R$ are considered; here we will choose (we denote $B_R = B(0,R)$)
\[D_R(\Gamma,\Gamma') = \frac{\card\big((\Gamma\Delta\Gamma')\cap B_R\big)}{\Vol(B_R)},\]
or if we want more uniformity
\begin{equation}\label{EqDR}
D_R^+(\Gamma,\Gamma') = \sup_{x\in\R^n}\frac{\card\big((\Gamma\Delta\Gamma')\cap B(x,R)\big)}{\Vol(B_R)};
\end{equation}
\item\label{cond3} if we require some uniformity (in $v$) of the convergence of the $\limsup$ in Definition~\ref{almperiod}, etc.
\end{enumerate}
\bigskip

In this paper, we will investigate two properties that can be expected to be implied by a notion of almost periodicity:
\begin{enumerate}
\item possessing a uniform density (see Definition~\ref{defunif});
\item being stable under discretizations of linear maps (see Definition~\ref{DefDiscrLin}).
\end{enumerate}

It will turn out that the geometric notions that are natural for studying points (1) and (2) do not coincide: we will define two different geometric notions of almost periodicity:
\begin{itemize}
\item almost periodic patterns (see Definition~\ref{DefAlmPer}) and
\item weakly almost periodic sets (see Definition~\ref{wap}),
\end{itemize}
the former being adapted to point (2) and the latter being adapted to point (1).

Our main goal will be to explore the links between these two notions of almost periodicity, and that of model set (in the general sense given by Definition~\ref{DefModel}). In particular, the geometric point of view about almost periodicity will allow us to prove that model sets possess a uniform density (see for example \cite{MR1636780} for this result under stronger hypotheses over model sets). The following theorem is a combination of Theorem~\ref{ModelAlmost}, Proposition~\ref{WeakAlmPer} and Proposition~\ref{limitexist}.

\begin{theorem}\label{theoA}
\[ \boxed{\begin{array}{c} \!\Gamma\text{ model}\!\\  \text{set} \end{array}} \implies
\boxed{\begin{array}{c} \Gamma\text{ almost}\\  \!\text{periodic pattern}\! \end{array}} \implies
\boxed{\begin{array}{c} \Gamma\text{ weakly}\\  \!\text{almost periodic}\! \end{array}} \implies
\boxed{\begin{array}{c} \Gamma\text{ has a}\\  \!\text{uniform density}\! \end{array}}\]
\end{theorem}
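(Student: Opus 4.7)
The plan is to prove the three implications in the chain separately, with decreasing difficulty.

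For the first implication $\Gamma$ model set $\Rightarrow$ $\Gamma$ almost periodic pattern, let $\Lambda\subset\R^n\times\R^m$ be the underlying lattice and $W\subset\R^m$ the window, with projections $\pi_1,\pi_2$. The key observation is that any $\lambda_0\in\Lambda$ whose internal component $w:=\pi_2(\lambda_0)$ is small gives a candidate near-period $v:=\pi_1(\lambda_0)$ of $\Gamma$, because translating $\Gamma$ by $-v$ amounts to replacing the window $W$ by $W-w$, so
\[ \Gamma\,\Delta\,(\Gamma-v)\;=\;\pi_1\!\Big(\big\{\lambda\in\Lambda:\pi_2(\lambda)\in W\,\Delta\,(W-w)\big\}\Big). \]
Under the regularity assumptions built into the model-set definition, the internal measure of $W\,\Delta\,(W-w)$ tends to $0$ with $w$. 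A uniform count of lattice points falling into a slab $\pi_2^{-1}(A)\cap(B(x,R)\times\R^m)$, valid for all $x$ and all Borel $A\subset\R^m$, converts this measure bound into a uniform density estimate $D_R^+(\Gamma,\Gamma-v)\leq\varep$. Density of $\pi_2(\Lambda)$ in $\R^m$ then ensures that such vectors $v$ form a relatively dense subset of $\R^n$. I expect the main obstacle here to be exactly this uniform lattice-point count under the weakest possible hypotheses on $W$, since uniformity in $x$ is what separates the almost periodic pattern property from a merely non-uniform statement.

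The second implication is essentially formal: since $D_R\leq D_R^+$ pointwise by definition \eqref{EqDR}, every $\varep$-almost-period relative to $D_R^+$ is an $\varep$-almost-period relative to $D_R$, and the additional structural requirements in the almost periodic pattern definition should be at least as strong as those in the weakly almost periodic one, so the implication reduces to unpacking the two definitions.

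For the last implication $\Gamma$ weakly almost periodic $\Rightarrow$ $\Gamma$ has a uniform density, I would fix $\varep>0$ and let $P_\varep\subset\R^n$ be a relatively dense set of $\varep$-almost-periods with gaps at most $M(\varep)$. For each $v\in P_\varep$ and $R$ large,
\[ \big|\card(\Gamma\cap B(x,R))-\card(\Gamma\cap B(x+v,R))\big|\;\leq\;\card\!\big((\Gamma\,\Delta\,(\Gamma-v))\cap B(x,R)\big)\;\leq\;2\varep\,\Vol(B_R), \]
so translating the center of a ball of radius $R$ by any $v\in P_\varep$ changes the count by at most $\varep\Vol(B_R)+o(\Vol(B_R))$. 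Covering a very large ball $B_{R'}$ by almost-disjoint translates $B(v_i,R)$ with $v_i\in P_\varep$ and averaging, the ratio $\card(\Gamma\cap B(x,R))/\Vol(B_R)$ becomes Cauchy in $R$ with a limit independent of $x$; letting $\varep\to 0$ yields the uniform density. The subtle point is matching scales: one must take $R$ much larger than $M(\varep)$ so that boundary-to-volume errors of order $M(\varep)/R$ are dominated by $\varep$, which requires the relative density of $P_\varep$ not to degenerate too fast as $\varep\to 0$.
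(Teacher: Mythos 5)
Your overall architecture (three separate implications, near-periods of a model set coming from lattice vectors with small internal component, symmetric differences controlled by the measure of a neighbourhood of $\partial W$, and a tiling/averaging argument for the density) matches the paper's strategy, but there are two genuine gaps. The most serious one concerns the middle implication. You treat ``almost periodic pattern $\Rightarrow$ weakly almost periodic'' as a formal consequence of $D_R\le D_R^+$, but Definition~\ref{wap} is not a statement about almost-periods at all: it asks that for \emph{every} pair of centres $x,y$ there exist \emph{some} vector $v$ (a priori unrelated to $y-x$) making the local configurations $B(x,R)\cap\Gamma$ and $(B(y,R)\cap\Gamma)-v$ nearly coincide. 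The paper's Proposition~\ref{WeakAlmPer} proves this by choosing $v$ to be an $\varep$-almost-period $v_y$ within distance $R_{\mathcal N_\varep}$ of $y$, and then the whole content of the proof is bounding the discrepancy $\card\big((B(y,R)\Delta B(v_y,R))\cap\Gamma\big)$ caused by $v_y\ne y$; this uses uniform discreteness to cover $B(y,R)\Delta B(v_y,R)$ by $O(R^{n-1})$ cubes each containing at most one point of $\Gamma$, so that the error is $o(\Vol(B_R))$. None of this is in your sketch. Relatedly, your proof of the last implication starts from ``a relatively dense set $P_\varep$ of $\varep$-almost-periods'', which is the almost periodic pattern hypothesis, not the weakly almost periodic one; since the paper does not know whether weak almost periodicity implies the existence of such a set, your argument proves ``almost periodic pattern $\Rightarrow$ uniform density'' but not the stated implication. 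The correct argument (Proposition~\ref{limitexist}) tiles $\R^n$ by cubes $B(Ru,R)$ and compares each directly to $B(0,R)$ using inequality \eqref{EqWeakAlmPer}.

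For the first implication, your identity $\Gamma\Delta(\Gamma-v)=\pi_1(\{\lambda:\pi_2(\lambda)\in W\Delta(W-w)\})$ and the appeal to ``density of $\pi_2(\Lambda)$ in $\R^m$'' implicitly assume the classical irreducible cut-and-project setting. The paper's Definition~\ref{DefModel} allows $p_2$ to be non-injective and the internal projection of $\Lambda$ to be non-dense, which is exactly why the proof of Theorem~\ref{ModelAlmost} first splits $\Lambda=\Lambda_1\oplus\Lambda_2\oplus\Lambda_3$ (periods, discrete part, dense part), applies regularity of $W$ along the affine subspaces $V+p_1(\lambda_2)$ rather than to $\Leb_{\R^m}(W\Delta(W-w))$, and needs a separate nontrivial argument (Lemma~\ref{LemModelAlmost}) to show that the candidate translation set $\mathcal N(\eta)$ is relatively dense in $\R^n$ --- relative density does not follow merely from density of the internal projection. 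You correctly identify the uniform lattice-point count as an obstacle, but these structural reductions are an equally essential missing ingredient under the paper's weak hypotheses.
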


\bigskip

In more details, we will use the following definition of model set.

\begin{definition}\label{DefModel}
Let $\Lambda$ be a lattice of $\R^{m+n}$, $p_1$ and $p_2$ the projections of $\R^{m+n}$ on respectively $\R^m\times \{0\}_{\R^n}$ and $\{0\}_{\R^m} \times \R^n$, and $W$ a measurable subset of $\R^m$. The \emph{model set} modelled on the lattice $\Lambda$ and the \emph{window} $W$ is (see Figure~\ref{FigModel})
\[\Gamma = \big\{ p_2(\lambda)\mid \lambda\in\Lambda,\, p_1(\lambda)\in W \big\}.\]
\end{definition}

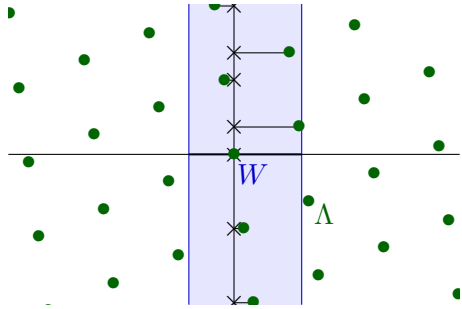
\begin{figure}[t]
\begin{center}
\begin{tikzpicture}[scale=1]
\fill[color=blue!10!white] (-.6,-2) rectangle (.9,2);
\draw[color=blue!80!black] (-.6,-2) -- (-.6,2);
\draw[color=blue!80!black] (.9,-2) -- (.9,2);
\draw[color=blue!80!black, thick] (-.6,0) -- (.9,0);
\draw[color=blue!80!black] (.25,0) node[below] {$W$};
\draw (-3,0) -- (3,0);
\draw (0,-2) -- (0,2);
\clip (-3,-2) rectangle (3,2);

\draw (.866,.364) -- (0,.364);
\draw (-.129,.987) -- (0,.987);
\draw (.737,1.351) -- (0,1.351);
\draw (-.258,1.974) -- (0,1.974);
\draw (.129,-.987) -- (0,-.987);
\draw (.258,-1.974) -- (0,-1.974);

\draw (0,0) node {$\times$};
\draw (0,.364) node {$\times$};
\draw (0,.987) node {$\times$};
\draw (0,1.351) node {$\times$};
\draw (0,1.974) node {$\times$};
\draw (0,-.987) node {$\times$};
\draw (0,-1.974) node {$\times$};

\foreach\i in {-3,...,3}{
\foreach\j in {-3,...,3}{
\draw[color=green!40!black] (.866*\i-.129*\j,.364*\i+.987*\j) node {$\bullet$};
}}
\draw[color=green!40!black] (1.2,-.8) node {$\Lambda$};
\end{tikzpicture}
\caption[Model set]{Construction of a model set (crosses) from a lattice (green dots) and a window $W$ (blue)}\label{FigModel}
\end{center}
\end{figure}

This definition of model set is close to that introduced by Y. Meyer in the early seventies \cite{MR0485769}, but more general: in our case the projection $p_2$ is not supposed to be injective. We will need this hypothesis to prove that the images of $\Z^n$ by discretizations of linear maps are model sets.

Model sets are sometimes called ``cut and project'' sets in the literature. Notice that their definition, which could seem very restrictive for the set $\Gamma$, is in fact quite general: as stated by Y. Meyer in \cite{MR0485769}, every Meyer set\footnote{A set $\Gamma$ is a \emph{Meyer set} if $\Gamma-\Gamma$ is a Delone set. It is equivalent to ask that there exists a finite set $F$ such that $\Gamma-\Gamma \subset \Gamma + F$ (see \cite{MR1400744}).} is a subset of a model set. Conversely, model sets are Meyer sets (see \cite{MR1420415}). 
\bigskip

The other notions of almost periodicity we will study concern Delone sets.

\begin{definition}\label{reldense}
Let $\Gamma$ be a subset of $\R^n$.
\begin{itemize}
\item We say that $\Gamma$ is \emph{relatively dense} if there exists $R_\Gamma>0$ such that each ball with radius at least $R_\Gamma$ contains at least one point of $\Gamma$.
\item We say that $\Gamma$ is a \emph{uniformly discrete} if there exists $r_\Gamma>0$ such that each ball with radius at most $r_\Gamma$ contains at most one point of $\Gamma$.
\end{itemize}
The set $\Gamma$ is called a \emph{Delone} set if it is both relatively dense and uniformly discrete.
\end{definition}

In some sense, the geometric behaviour of model sets is as regular as possible among non periodic sets. Indeed, we will prove that model sets with regular window are almost periodic patterns in the following sense (Theorem~\ref{ModelAlmost}).

\begin{definition}\label{DefAlmPer}\index{$\mathcal N_\varep$}
A Delone set $\Gamma$ is an \emph{almost periodic pattern} if for every $\varep>0$, there exists $R_\varep>0$ such that the set (recall that $D_R^+$ is defined by Equation~\eqref{EqDR})
\begin{equation}\label{EqAlmPer}
\mathcal N_\varep^{R_\varep} = \Big\{v\in\R^n \mid \forall R\ge R_\varep,\  D_R^+\big( (\Gamma+v)\Delta \Gamma \big) <\varep \Big\}.
\end{equation}
is relatively dense. We will denote $\mathcal N_\varep = \bigcup_{R'>0}\mathcal N_\varep^{R'}$
and call it the \emph{set of $\varep$-translations} of $\Gamma$.
\end{definition}

This definition of almost periodicity somehow requires as much uniformity as possible. Remark that in this definition, the assumption of almost periodicity of the set of $\varep$-almost-periods is the weakest possible: we only want it not to have big wholes. However, Proposition~\ref{azur} proves that in fact, these sets contain relatively dense almost periodic patterns ; Theorem~\ref{ModelAlmost} will also prove that for model sets, the sets of $\varep$-almost-periods contains relatively dense model sets. Remark that we do not know if the converse of Theorem~\ref{ModelAlmost} is true, i.e. if for any almost periodic pattern $\Gamma$ and any $\varep>0$ there exists a model set which coincides with $\Gamma$ up to a set of density smaller than $\varep$.

We then prove that almost periodic patterns possess a uniform density. This will be done by proving first that these sets are weakly almost periodic (Proposition~\ref{WeakAlmPer}; again, we do not know if the converse is true or not).

\begin{definition}\label{wap}
We say that a Delone set $\Gamma$ is \emph{weakly almost periodic} if for every $\varep>0$, there exists $R>0$ such that for every $x,y\in\R^n$, there exists $v\in\R^n$ such that
\begin{equation}\label{EqWeakAlmPer}
\frac{\card\Big( \Big(B(x,R)\cap\Gamma\Big) \Delta \Big(\big(B(y,R)\cap\Gamma\big)-v\Big) \Big)}{\Vol(B_R)} \le \varep.
\end{equation}
Remark that \emph{a priori}, the vector $v$ is different from $y-x$.
\end{definition}

This definition somehow requires the least assumptions about the pseudodistances $D_R$ (we do not consider supremum limits); it seems that this is the weakest definition that implies the existence of a uniform density (Proposition~\ref{limitexist}).

\begin{definition}\label{defunif}
A discrete set $\Gamma\subset \R^n$ possesses a \emph{uniform density} if there exists a number $D(\Gamma)$, called the \emph{uniform density}, such that for every $\varep>0$, there exists $R_\varep>0$ such that for every $R>R_\varep$ and every $x\in \R^n$,
\[\left| \frac{\card\big(B(x,R)\cap \Gamma\big)}{\Vol(B_R)} - D(\Gamma) \right| < \varep.\]
In particular, for every $x\in\R^n$, we have
\[D(\Gamma) = \lim_{R\to +\infty}\frac{\card\big(B(x,R)\cap \Gamma\big)}{\Vol(B_R)}.\]
\end{definition}

Remark that there exists some definitions of notions of almost periodicity that require less uniformity (e.g. \cite{MR2227822, GM}), however they do not imply the existence of a uniform density.

Note that the previous paper of the author with Y. Meyer \cite{GM} investigates the relations of these two notions of almost periodicity with some others, these others notions arising from considering the sum of Dirac measures on the discrete and considering properties of almost periodicity of these measures (as explained in the beginning of this introduction). Some examples of sets being almost periodic for one definition and not another are given. However, it is not proved that model sets, in the weak definition we use here, possess a uniform density.
\bigskip

In a second part, we will study the relations between these notions of almost periodicity and the discretizations of linear maps.

\begin{definition}\label{DefDiscrLin}
The map $P : \R\to\Z$\index{$P$} is defined as a projection from $\R$ onto $\Z$. More precisely, for $x\in\R$, $P(x)$ is the unique\footnote{Remark that the choice of where the inequality is strict and where it is not is arbitrary.} integer $k\in\Z$ such that $k-1/2 < x \le k + 1/2$. This projection induces the map\index{$\pi$}
\[\begin{array}{rrcl}
\pi : & \R^n & \longmapsto & \Z^n\\
 & (x_i)_{1\le i\le n} & \longmapsto & \big(P(x_i)\big)_{1\le i\le n}
\end{array}\]
which is an Euclidean projection on the lattice $\Z^n$. Let $A\in M_n(\R)$. We denote by $\widehat A$ the \emph{discretization}\index{$\widehat A$} of the linear map $A$, defined by 
\[\begin{array}{rrcl}
\widehat A : & \Z^n & \longrightarrow & \Z^n\\
 & x & \longmapsto & \pi(Ax).
\end{array}\]
\end{definition}

This definition can be used to model what happens when we apply a linear transformation to a numerical picture (see \cite{Gui15c, nouvel:tel-00444088, thibault:tel-00596947, Jacob25, MR1382839, MR1832794, A1996_1075}). These works mainly focus on the local behaviour of the images of $\Z^2$ by discretizations of linear maps: given a radius $R$, what pattern can follow the intersection of this set with any ball of radius $R$? What is the number of such patterns, what are their frequencies? Are these maps bijections? Also, the local behaviour of discretizations of diffeomorphism is described by the discretizations of linear maps. Thus, it is crucial to understand the dynamics of discretizations of linear maps to understand that of diffeomorphisms, for example from an ergodic viewpoint (see \cite{Gui15a, Gui15b} and the thesis \cite{Guih-These}).
\bigskip

We then obtain the following result (combination of Proposition~\ref{ImgModel} and Theorem~\ref{imgquasi}).

\begin{theorem}
The image of an almost periodic pattern by the discretization of a linear map is an almost periodic pattern. The image of a model set by the discretization of a linear map is a model set.
\end{theorem}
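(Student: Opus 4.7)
The theorem is the conjunction of two assertions, and the techniques for proving each of them differ enough that I would treat them separately.

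For the \emph{model set} part, the plan is to give an explicit cut-and-project representation of $\widehat A(\Gamma)$. Starting from $\Gamma = \{p_2(\lambda) : \lambda \in \Lambda,\ p_1(\lambda) \in W\}$ with $\Lambda$ a lattice of $\R^{m+n}$, I would introduce
\[\Lambda' \;=\; \bigl\{\bigl(p_1(\lambda),\ A p_2(\lambda) - y,\ y\bigr) : \lambda \in \Lambda,\ y \in \Z^n\bigr\} \;\subset\; \R^m \times \R^n \times \R^n,\]
together with the window $W' = W \times\, ]\!-\!1/2,1/2]^n \subset \R^{m+n}$. Denoting by $p_1'$ the canonical projection of $\R^{m+2n}$ onto its first $m+n$ coordinates and by $p_2'$ the projection onto its last $n$ coordinates, the condition $p_1'(\lambda') \in W'$ decomposes as $p_1(\lambda) \in W$, which places $p_2(\lambda)$ in $\Gamma$, together with $Ap_2(\lambda) - y \in\, ]\!-\!1/2, 1/2]^n$, which by Definition~\ref{DefDiscrLin} forces $y = \pi(Ap_2(\lambda)) = \widehat A(p_2(\lambda))$. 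Hence $p_2'\bigl(\Lambda' \cap (p_1')^{-1}(W')\bigr) = \widehat A(\Gamma)$, and it only remains to verify that $\Lambda'$ is indeed a lattice of $\R^{m+2n}$, which reduces to a block-matrix determinant computation clean as soon as $A$ is invertible. This construction illustrates exactly why the paper needs $p_2'$ potentially non-injective on $\Lambda'$: all the vectors $(p_1(\lambda), Ap_2(\lambda), 0)$ with $\lambda \in \Lambda$ project to the single physical-space point~$0$.

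For the \emph{almost periodic pattern} part, fix $\varep > 0$ and aim at producing $\varep$-translations of $\widehat A(\Gamma)$ of the form $w = \pi(Av)$, with $v$ chosen from $\mathcal N_{\varep'}^{R_{\varep'}}$ for $\varep'$ to be tuned later. The inclusion to exploit is
\[\bigl(\widehat A(\Gamma) + w\bigr) \,\Delta\, \widehat A(\Gamma) \;\subset\; \widehat A\bigl((\Gamma + v)\,\Delta\,\Gamma\bigr) \,\cup\, E_v,\]
where $E_v$ collects those $\pi(A\gamma)$, $\gamma \in \Gamma$, for which $A\gamma$ lies within distance $\|Av - w\|_\infty \le 1/2$ of the cell boundary $\Z^n + (\tfrac12,\dots,\tfrac12)$ of the rounding map $\pi$. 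The first term is controlled by the $\varep'$-translation property of $v$ at the rescaled radius $\|A\|R$, up to a factor bounding the multiplicity of $\widehat A$. The second term is handled by a Fubini argument: the density of points of $\Gamma$ whose image by $A$ falls into a boundary strip of thickness $\delta$ is $O(\delta)$, so tuning $v$ inside $\mathcal N_{\varep'}$ to pull $Av$ close enough to $\Z^n$ shrinks the contribution of $E_v$ below $\varep/2$. Relative density of such $w$ then follows directly from that of the $v$'s and linearity of $A$.

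The main obstacle will be the \emph{uniformity} of the boundary estimate above with respect to the centre $x$ of the ball entering $D_R^+$. To secure this uniformity I would appeal to Proposition~\ref{limitexist} not only for $\Gamma$ itself, but for each of the sub-patterns $\Gamma \cap A^{-1}(\text{strip of width }\delta)$, which are again almost periodic patterns by a standard intersection-with-periodic-condition argument. A smaller preliminary that should be cleared before any of this applies is that $\widehat A(\Gamma)$ is Delone: uniform discreteness is free since $\widehat A(\Gamma) \subset \Z^n$, and relative density follows from the relative density of $\Gamma$ together with non-degeneracy of $A$ by a covering argument.
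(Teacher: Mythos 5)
Your model-set argument is correct and is essentially the paper's own proof of Proposition~\ref{ImgModel}: the lattice you build, $\bigl(p_1(\lambda),\,Ap_2(\lambda)-y,\,y\bigr)$, is exactly the one spanned by the block matrix in the paper, with the same window $W\times\,]{-}\tfrac12,\tfrac12]^n$. Your skeleton for the almost periodic pattern part is also the right one (translations of the form $\pi(Av)$, a splitting into a rounding-error term and a genuine translation term), but it has two genuine gaps.

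First, the claim that ``the density of points of $\Gamma$ whose image by $A$ falls into a boundary strip of thickness $\delta$ is $O(\delta)$'' is false in general. It relies on equidistribution of $\{(Ax)_i \bmod \Z : x\in\Z^n\}$, which holds only for the rows $i\notin I_\Q(A)$ (Notation~\ref{intelligent}). If row $i$ of $A$ has all rational entries, the fractional parts of $(Ax)_i$ take finitely many values, and one of them may sit exactly on (or within any fixed distance of) the rounding boundary $\Z+\tfrac12$; e.g.\ for a row $(1/2,0)$ half of the points of $\Z^2$ land exactly on the boundary, so $E_v$ has density bounded below independently of $\delta$. The paper circumvents this by demanding, for the rational rows, that $(Av)_i\in\Z$ \emph{exactly} (so that $w_i=0$ and no rounding discrepancy occurs in those coordinates), and reserving the equidistribution estimate for the irrational rows; your Fubini step must be split accordingly or it fails. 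Second, you assert that one can ``tune $v$ inside $\mathcal N_{\varep'}$ to pull $Av$ close to $\Z^n$'' while retaining relative density of the set of such $v$. This is not automatic --- for a given relatively dense $\mathcal N_{\varep'}$ the subset $\{v : d_\infty(Av,\Z^n)<\delta\}$ could a priori be empty --- and it is precisely the content of the paper's Lemma~\ref{tiroir}, whose proof needs the pigeonhole principle applied to the multiples $A(kv)\bmod\Z^n$ together with the near-additivity of $\varep$-translations (Lemma~\ref{arithProg}): one replaces $v$ by $v+(k_v-1)v'$ for a suitable nearby translation $v'$. Finally, your appeal to sub-patterns $\Gamma\cap A^{-1}(\text{strip})$ being almost periodic patterns is both unjustified and unnecessary: since $\Gamma\subset\Z^n$, the uniform ($D_R^+$) boundary-strip estimate for $\Z^n$ itself, which follows uniformly in $x$ from equidistribution, already bounds the contribution of $E_v$.
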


Unfortunately, the notion of weakly almost periodic set is not very convenient to manipulate and we have not succeeded to prove that it is stable under the action of discretization of linear maps. The reason of it is that we need some hypotheses of uniformity to prove the stability under discretizations of linear maps.
\bigskip

Let us summarize the notations we will use in this paper. We fix once for all an integer $n\ge 1$. We will denote by $\llbracket a, b \rrbracket$\index{$\llbracket\cdot\rrbracket$} the integer segment $[a,b]\cap\Z$. Every ball will be taken with respect to the infinite norm; in particular, for $x = (x_1,\cdots,x_n)$, we will have\index{$B(x,R)$}
\[B(x,R) = B_\infty(x,R) = \big\{y=(y_1,\cdots,y_n)\in\R^n\mid \forall i\in \llbracket 1, n\rrbracket, |x_i-y_i|<R\big\}.\]
We will also denote $B_R = B(0,R)$\index{$B_R$}. Finally, we will denote by $\lfloor x \rfloor$\index{$\lfloor \cdot \rfloor$} the biggest integer that is smaller than $x$ and $\lceil x \rceil$\index{$\lceil \cdot \rceil$} the smallest integer that is bigger than $x$.

\section{Model sets are almost periodic patterns}\label{SecModel}

In this section, we will only consider model sets whose window is regular.

\begin{definition}
Let $W$ be a subset of $\R^n$. We say that $W$ is \emph{regular} if for every affine subspace $V\subset\R^n$, we have
\[\Leb_V\Big( B_V\big(\partial (V\cap W),\eta\big)\Big) \underset{\eta\to 0}{\longrightarrow} 0,\]
where $\Leb_V$ denotes the Lebesgue measure on $V$, and $B_V\big(\partial (V\cap W),\eta\big)$ the set of points of $V$ whose distance to $\partial (V\cap W)$ is smaller than $\eta$ (of course, the boundary is also taken in restriction to $V$).
\end{definition}

\begin{theoreme}\label{ModelAlmost}
A model set modelled on a regular window is an almost periodic pattern. Moreover, for every $\varep>0$ the set of $\varep$-almost-periods contains a relatively dense model set. More precisely, for every $\varep>0$, there exists $R_\varep>0$ such that the set $\mathcal N_\varep^{R_\varep}$ (see Definition~\ref{DefAlmPer}) contains a relatively dense model set.
\end{theoreme}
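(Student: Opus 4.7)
The plan is to exploit the cut-and-project structure by restricting attention to translations of the form $v = p_2(\mu)$ with $\mu \in \Lambda$. First I would observe that $\Gamma + v$ is then the model set on the same lattice $\Lambda$ with the shifted window $W + p_1(\mu)$, and deduce (even without injectivity of $p_2|_\Lambda$) the inclusion
\[
\Gamma \Delta (\Gamma + v) \subset \Gamma_\mu := \big\{ p_2(\lambda) \bigm| \lambda \in \Lambda,\ p_1(\lambda) \in W \Delta (W + p_1(\mu)) \big\},
\]
since any $x \in \R^n$ in the left-hand side must fail the window condition on \emph{every} $\Lambda$-preimage under $p_2$. The regularity of $W$ then provides the geometric input $W \Delta (W + h) \subset B_{\R^m}(\partial W, |h|)$, whose Lebesgue measure tends to $0$ with $|h|$.

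The technical core would be a uniform lattice-counting lemma: for some constant $C = C(\Lambda) > 0$ and every bounded measurable $W' \subset \R^m$ contained in a fixed ball,
\[
\limsup_{R \to \infty} \sup_{x \in \R^n} \frac{\card\{\lambda \in \Lambda : p_2(\lambda) \in B(x,R),\ p_1(\lambda) \in W'\}}{\Vol(B_R)} \le C\, \Leb_m(W'),
\]
with the rate of convergence uniform over such $W'$ of bounded diameter. This is a Weyl-type equidistribution statement for the $p_1$-image of the $\Lambda$-points whose $p_2$-coordinate lies in a large ball, controlling simultaneously the center of the ball and the window.

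With these ingredients in hand, given $\varep > 0$, I would use regularity of $W$ to select $\eta > 0$ so that $C\, \Leb_m(W \Delta (W + h)) < \varep / 2$ for every $|h| \le \eta$, then apply the counting lemma uniformly to the bounded family $\{W \Delta (W + h) : |h| \le \eta\}$ to extract some $R_\varep > 0$ such that $v = p_2(\mu) \in \mathcal{N}_\varep^{R_\varep}$ whenever $\mu \in \Lambda$ and $|p_1(\mu)| \le \eta$. The set of all such $v$ is, by construction, precisely the model set on $\Lambda$ with window $B_{\R^m}(0, \eta)$; since this window has non-empty interior, that model set is a Meyer set and hence relatively dense in $\R^n$, giving the required relatively dense model set contained in $\mathcal{N}_\varep^{R_\varep}$.

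The main obstacle I expect is securing the \emph{uniformity} in the counting lemma: the convergence rate has to be controlled independently of $\mu$, or else $R_\varep$ would depend on $\mu$ and one would only obtain pointwise almost-periodicity rather than the required uniform statement. Regularity of $W$ handles the boundary contribution, but the non-injectivity of $p_2|_\Lambda$ (which the paper explicitly allows) also requires care, so as to translate symmetric differences in $\R^n$ into symmetric differences of windows in $\R^m$ without inflating the density estimate.
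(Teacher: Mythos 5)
Your overall architecture is the same as the paper's: take translations $v=p_2(\mu)$ with $p_1(\mu)$ small, observe (correctly, and with the right care about non-injectivity of $p_2|_\Lambda$) that $\Gamma\Delta(\Gamma+v)$ is contained in an auxiliary model set whose window is a thin neighbourhood of $\partial W$, control its uniform upper density via regularity, and get relative denseness of the translation set from the fact that its window $B_{\R^m}(0,\eta)$ has nonempty interior. However, your ``uniform lattice-counting lemma'' is false in the generality of this paper, and that is a genuine gap rather than a technicality. The bound $C\,\Leb_m(W')$ is the correct right-hand side only when $p_1(\Lambda)$ is equidistributed with respect to $m$-dimensional Lebesgue measure, i.e.\ dense in $\R^m$. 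In general $\overline{p_1(\Lambda)}$ is a proper closed subgroup, a union of cosets $V+p_1(\lambda_2)$ of a subspace $V$ over a discrete set of $\lambda_2$'s; the lattice points are concentrated on these slices, so the density of the model set with window $W'$ is governed by $\sum_{\lambda_2}\Leb_{V+p_1(\lambda_2)}\big(W'\cap(V+p_1(\lambda_2))\big)$ and not by $\Leb_m(W')$ (already $W'$ a segment inside one coset has $\Leb_m(W')=0$ but positive density; and one can build regular windows $W$ for which the slice measure of $W\Delta(W+h)$ exceeds any prescribed multiple of $\Leb_m(W\Delta(W+h))$, e.g.\ by making $W$ very thin transversally to $V$). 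This is precisely why the paper's definition of a regular window quantifies over \emph{all} affine subspaces $V$ rather than just $V=\R^m$, why its proof first decomposes $\Lambda=\Lambda_1\oplus\Lambda_2\oplus\Lambda_3$ to identify $V=\Vect(p_1(\Lambda_3))$, and why it restricts the translation set to $\mathcal N(\eta)=\{p_2(\lambda_3):\lambda_3\in\Lambda_3,\ \|p_1(\lambda_3)\|_\infty\le\eta\}$, so that the shift $h=p_1(\lambda_0)$ stays inside $V$ and the enveloping window $W_\eta=\{k+w: k\in\partial W,\ w\in V\cap B_\eta\}$ can be estimated coset by coset. You use only the $V=\R^m$ instance of regularity, which does not suffice.

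A secondary point: you dismiss the relative denseness of the translation set by invoking ``model sets are Meyer sets.'' In this weak setting that is not free: the paper devotes Lemma~\ref{LemModelAlmost} to proving that a model set with nonempty-interior window is relatively dense (via a fundamental-domain argument in the space spanned by the nearly-$\ker p_1$ lattice vectors), and it additionally needs the matrix/dimension-count argument showing $\Vect\big(p_2(\Lambda_3)\big)=\im p_2$ to conclude that the translation set $\mathcal N(\eta)$, which lives only on $\Lambda_3$, is relatively dense in all of $\R^n$. If you repair the counting lemma by passing to the slice measures, you will need this same structural information anyway, so the lattice decomposition is not optional in this approach.
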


Thus, model sets are in some sense the most regular as possible among non-periodic almost-periodic sets: for condition~(\ref{cond1}) page~\pageref{cond1}, the set of $\varep$-almost-periods is itself a model set, for condition~(\ref{cond2}) the distance on finite sets is the uniform distance, and for condition~(\ref{cond3}) we have uniformity in $v$ of convergence of the $\limsup$. Combined with Propositions \ref{WeakAlmPer} and \ref{limitexist}, this theorem directly implies the following corollary.

\begin{coro}\label{corolimitexist2}
Every model set possesses a uniform density.
\end{coro}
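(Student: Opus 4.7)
The plan is to chain together the three implications already established (or about to be established) in the paper, which together form the content of Theorem~\ref{theoA}.

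First, I would invoke Theorem~\ref{ModelAlmost}: since we are in the section where all model sets are assumed to have regular windows, any such model set $\Gamma$ is an almost periodic pattern in the sense of Definition~\ref{DefAlmPer}. That is, for every $\varep>0$ there is an $R_\varep$ so that the set $\mathcal N_\varep^{R_\varep}$ of uniform $\varep$-translations of $\Gamma$ is relatively dense.

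Next, I would apply Proposition~\ref{WeakAlmPer} to conclude that $\Gamma$ is weakly almost periodic (Definition~\ref{wap}): for every $\varep>0$ there exists $R>0$ such that the local patterns of $\Gamma$ in any two balls of radius $R$ coincide up to translation and up to a symmetric difference of relative size at most $\varep$.

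Finally, I would apply Proposition~\ref{limitexist} to a weakly almost periodic Delone set, which yields the existence of the uniform density $D(\Gamma)$ in the sense of Definition~\ref{defunif}. Concatenating the three arrows gives the corollary; no estimate beyond what is already packaged in those three statements is needed. The only potential subtlety is verifying that a model set on a regular window is indeed Delone (so that Propositions~\ref{WeakAlmPer} and~\ref{limitexist}, which are stated for Delone sets, actually apply), but this is either a standing hypothesis or an immediate consequence of the regularity of $W$ together with the lattice structure of $\Lambda$, so no real obstacle arises.
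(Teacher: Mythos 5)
Your proposal is correct and follows exactly the paper's own argument: the corollary is obtained by concatenating Theorem~\ref{ModelAlmost}, Proposition~\ref{WeakAlmPer} and Proposition~\ref{limitexist}, which is precisely what the author states just before the corollary. The Delone subtlety you flag is already absorbed into the statement of Theorem~\ref{ModelAlmost} (an almost periodic pattern is by definition Delone), so nothing further is needed.
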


Remark that it seems to us that the simplest way to prove the convergence of the uniform density for model sets (in the generality of our definition) is to follow the strategy of this paper: prove that these model sets are weakly almost periodic and then that weakly almost periodic sets possess a uniform density.

The part of Theorem~\ref{ModelAlmost} stating that the sets of $\varep$-almost-periods contain relatively dense model sets should be qualified by Proposition~\ref{azur}: in general, the set of $\varep$-almost-periods is not only relatively dense but is also an almost periodic pattern.
\bigskip

To prove Theorem~\ref{ModelAlmost}, we begin by a weak version of it.

\begin{lemme}\label{LemModelAlmost}
A model set modelled on a window with nonempty interior is relatively dense.
\end{lemme}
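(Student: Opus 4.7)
The plan is to pass to the quotient torus $T=\R^{m+n}/\Lambda$, with quotient map $\pi:\R^{m+n}\to T$, and exploit its compactness.

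I first reformulate the goal. For a given $x\in\R^n$, the translate $x+\overline{B(0,R)}$ contains a point of $\Gamma$ exactly when some $\lambda\in\Lambda$ satisfies $p_1(\lambda)\in W$ and $p_2(\lambda)\in\overline{B(x,R)}$; equivalently $\lambda-(0,x)\in W\times\overline{B(0,R)}$, which in $T$ reads $\pi(0,x)\in\pi\bigl(W\times\overline{B(0,R)}\bigr)$. So $\Gamma$ being $R$-relatively dense is equivalent to $\pi\bigl(W\times\overline{B(0,R)}\bigr)=T$. Using the nonempty interior of $W$, I fix an open ball $B(w_0,\delta)\subset W$, and aim for the slightly stronger statement $\pi\bigl(B(w_0,\delta)\times\overline{B(0,R)}\bigr)=T$ for some $R>0$.

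The key intermediate step is the ``slab'' identity $\pi\bigl(B(w_0,\delta)\times\R^n\bigr)=T$. This is where the standing non-degeneracy condition enters, namely that $p_1(\Lambda)$ is dense in $\R^m$ (the usual hypothesis in the cut-and-project setup): given any class in $T$ with a representative $(a,b)\in\R^{m+n}$, density provides $\lambda\in\Lambda$ with $p_1(\lambda)\in B(w_0-a,\delta)$, so that $(a,b)+\lambda$ is an equivalent representative whose first coordinate lies in $B(w_0,\delta)$.

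Finally, the open sets $U_R:=\pi\bigl(B(w_0,\delta)\times B(0,R)\bigr)$ are nested and increase to $T$ as $R\to\infty$ by the previous step. Compactness of $T$, combined with the total ordering of $\{U_R\}_{R>0}$ by inclusion, extracts a single finite $R$ with $U_R=T$, and for this $R$ the set $\Gamma$ is $R$-relatively dense. The only real subtlety is the density of $p_1(\Lambda)$ in $\R^m$, without which the statement genuinely fails (e.g.\ $\Lambda=\Z^2$ with $W=(0,1)$ yields $\Gamma=\emptyset$); once this built-in genericity is granted, the compactness argument on the torus is routine.
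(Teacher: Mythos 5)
Your argument is internally consistent, but it establishes the lemma only under the extra standing hypothesis that $p_1(\Lambda)$ is dense in $\R^m$, and that hypothesis is exactly what this paper refuses to assume: Definition~\ref{DefModel} is deliberately weaker than the classical cut-and-project setup (no density of $p_1(\Lambda)$, no injectivity of the projections). The gap is not hypothetical. Already for $\Lambda=\Z^{m+n}$ and $W=B_{1/4}$ the lemma must produce $\Gamma=\Z^n$, which is relatively dense, yet $p_1(\Lambda)=\Z^m$ is nowhere dense and your ``slab identity'' $q\big(B(w_0,\delta)\times\R^n\big)=T$ (writing $q$ for the quotient map onto $T=\R^{m+n}/\Lambda$, since $\pi$ is already taken in this paper) is simply false there; and in the proof of Theorem~\ref{ModelAlmost} the lemma is applied to $\mathcal N(\eta)$, built from the module $\Lambda_3$ whose $p_1$-image is dense only in a proper subspace $V$ of $\im p_1$ in general. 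You are right that the statement, read for arbitrary windows and lattices, genuinely fails (your $\Lambda=\Z^2$, $W=(0,1)$ example); but the paper's way out is not to postulate density --- it proves, and only ever uses, the case $W=B_\eta$ centred at $0$.

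There is a second, related overshoot: relative denseness of $\Gamma$ is equivalent to $q\big(\{0\}_{\R^m}\times\R^n\big)\subset q\big(W\times\overline{B(0,R)}\big)$, not to $q\big(W\times\overline{B(0,R)}\big)=T$; you only need the slab to cover the image of $\ker p_1=\im p_2$, not the whole torus. If you replace $T$ by the compact subgroup $\overline{q(\ker p_1)}$ and run your (otherwise correct) nested-open-cover compactness argument inside that subgroup, the density you need becomes automatic for $W=B_\eta$ and the proof can be repaired. This is essentially what the paper does, in linear rather than group-theoretic terms: it introduces $V=\bigcap_{\eta'>0}\Vect\big(p_1^{-1}(B_{\eta'})\cap\Lambda\big)$, checks $\ker p_1\subset V$, and then uses a Minkowski-type fundamental-domain argument (a symmetric convex set containing suitable multiples of a basis of a lattice contains a fundamental domain) to produce a single radius $R$ with $\big(p_1^{-1}(B_\eta)\cap p_2^{-1}(B_R)\big)+v$ meeting $\Lambda$ for every $v\in V$, hence for every translate along $\im p_2$. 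So the torus-compactness idea is sound, but as written your proof does not cover the lattices to which the lemma is actually applied.
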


\begin{proof}[Proof of Lemma \ref{LemModelAlmost}]
We prove this lemma in the specific case where the window is $B_\eta$ (recall that $B_\eta$ is the infinite ball of radius $\eta$ centred at 0). We will use this lemma only in this case (and the general case can be treated the same way).

Let $\Gamma$ be a model set modelled on a lattice $\Lambda$ and a window $B_\eta$. We will use the fact that for any centrally symmetric convex set $S\subset \R^n$, if there exists a basis $e_1,\cdots,e_n$ of $\Lambda$ such that for each $i$, $\lceil n/2 \rceil e_i \in S$, then $S$ contains a fundamental domain of $\R^n/\Lambda$, that is to say, for every $v\in\R^n$, we have $(S+v)\cap \Lambda\neq\emptyset$. This is due to the fact that the parallelepiped spanned by the vectors $e_i$ is included into the simplex spanned by the vectors $\lceil n/2\rceil e_i$.

We set
\[ V = \bigcap_{\eta'>0} \Vect\big(p_1^{-1}(B_{\eta'})\cap \Lambda\big) = \bigcap_{\eta'>0}\Vect\big\{\lambda\in\Lambda\mid d_\infty(\lambda,\ker p_1)\le\eta'\big\},\]
and remark that $\im p_2 = \ker p_1\subset V$, simply because for every vectorial line $D\subset \R^n$ (and in particular for $D\subset \ker p_1$), there exists some points of $\Lambda\setminus\{0\}$ arbitrarily close to $D$. We also take $R>0$ such that
\[V\subset V'\doteq\Vect\big(p_1^{-1}(B_{\eta/\lceil n/2\rceil})\cap \Lambda \cap p_2^{-1}(B_R)\big).\]
We then use the remark made in the beginning of this proof and apply it to the linear space $V'$, the set $S = \big(p_2^{-1}(B_R)\times p_1^{-1}(B_\eta)\big)\cap V'$, and the module $V'\cap \Lambda$. This leads to
\[\forall v\in V,\ \Big(\big(p_1^{-1}(B_\eta) \cap p_2^{-1}(B_R)\big) + v \Big) \cap \Lambda \neq \emptyset,\]
and as $\im p_2 \subset V$, we get
\[\forall v'\in \R^n,\ \big(p_1^{-1}(B_\eta) \cap p_2^{-1}(B_R+v')\big) \cap \Lambda \neq \emptyset;\]
this proves that the model set is relatively dense for the radius $R$.
\end{proof}

\begin{proof}[Proof of Theorem \ref{ModelAlmost}]
Let $\Gamma$ be a model set modelled on a lattice $\Lambda$ and a window $W$.

First of all, we decompose $\Lambda$ into three supplementary modules: $\Lambda = \Lambda_1 \oplus \Lambda_2 \oplus \Lambda_3$, such that (see \cite[Chap. VII, \S 1, 2]{MR1726872}):
\begin{enumerate}
\item $\Lambda_1 = \ker p_1 \cap \Lambda$;
\item $p_1(\Lambda_2)$ is discrete;
\item $p_1(\Lambda_3)$ is dense in the vector space $V$ it spans (and such a vector space is unique), and $V\cap p_1(\Lambda_2) = \{0\}$.
\end{enumerate}
As $\Lambda_1 = \ker p_1 \cap \Lambda = \im p_2 \cap \Lambda$, we have $\Lambda_1 = p_2(\Lambda_1)$. Thus, for every $\lambda_1\in\Lambda_1$ and every $\gamma\in\Gamma$, we have $\lambda_1+\gamma\in\Gamma$. So $\Lambda_1$ is a set of periods for $\Gamma$. Therefore, considering the quotients $\R^n/\Vect\Lambda_1$ and $\Lambda/\Lambda_1$ if necessary, we can suppose that ${p_1}_{|\Lambda}$ is injective (in other words, $\Lambda_1 = \{0\}$).

Under this assumption, the set $p_2(\Lambda_3)$ spans the whole space $\im p_2$. Indeed, as $\ker p_1\cap\Lambda = \{0\}$, we have the decomposition
\begin{equation}\label{eqAgr}
\R^{m+n} = \underbrace{\ker p_1}_{=\im p_2} \oplus \underbrace{\Vect\big(p_1(\Lambda_2)\big) \oplus \Vect\big(p_1(\Lambda_3)\big)}_{=\im p_1}.
\end{equation}
Remark that as $p_1(\Lambda_2)$ is discrete and $\ker p_1\cap\Lambda_2 = \{0\}$, we have $\dim\Vect\big(p_1(\Lambda_2)\big) = \dim\Lambda_2$; thus, considering the dimensions in the decomposition \eqref{eqAgr}, we get
\begin{equation}\label{EstimDimpt}
\dim\Vect(\Lambda_3) = \dim \Big(\ker p_1 \oplus \Vect\big(p_1(\Lambda_3)\big)\Big).
\end{equation}
The following matrix represents a basis of $\Lambda = \Lambda_2\oplus\Lambda_3$ in a basis adapted to the decomposition \eqref{eqAgr}.
\[\begin{array}{cc}
&
\begin{array}{cc}
\Lambda_2 &  \Lambda_3\\
\overbrace{\phantom{\hspace{1cm}}} & \overbrace{\phantom{\hspace{1cm}}}\end{array}\\
\begin{array}{r}
\ker p_1 = \im p_2           \Big\{\!\!\!\!\!\!\\
\Vect\big(p_1(\Lambda_2)\big)\Big\{\!\!\!\!\!\!\\
\Vect\big(p_1(\Lambda_3)\big)\Big\{\!\!\!\!\!\!
\end{array}
& \renewcommand{\arraystretch}{1.5}
\left(\begin{array}{c|c}
* & *\\ \hline
* & 0\\ \hline
\hspace{.4cm}0 \hspace{.4cm}{} & \hspace{.4cm} *\hspace{.4cm}{}
\end{array}\right)
\renewcommand{\arraystretch}{1}
\end{array}\]

We can see that the projection of the basis of $\Lambda_3$ on $\im p_2 \oplus \Vect\big(p_1(\Lambda_3)\big)$ form a free family; by Equation~\eqref{EstimDimpt}, this is in fact a basis. Thus, $\Vect(\Lambda_3)\supset \ker p_1 = \im p_2$, so $\Vect\big(p_2(\Lambda_3)\big) = \im (p_2)$.
\bigskip

For $\eta>0$, let $\mathcal N(\eta)$ be the model set modelled on $\Lambda$ and $B(0,\eta)$, that is
\[\mathcal N(\eta) = \{p_2(\lambda_3)\mid \lambda_3 \in \Lambda_3,\,\|p_1(\lambda_3)\|_\infty\le \eta\}.\]
Lemma \ref{LemModelAlmost} asserts that $\mathcal N(\eta)$ is relatively dense in the space it spans, and the previous paragraph asserts that this space is $\im p_2$. The next lemma, which obviously implies Theorem \ref{ModelAlmost}, expresses that if $\eta$ is small enough, then $\mathcal N(\eta)$ is the set of translations we look for.

\begin{lemme}\label{Lem2ModelAlmost}
For every $\varep>0$, there exists $\eta>0$ and a regular model set $Q(\eta)$ such that $D^+(Q(\eta))\leq \varep$ and
\[v\in \mathcal N(\eta)\Rightarrow (\Gamma+v)\Delta \Gamma\subset Q(\eta).\]
\end{lemme}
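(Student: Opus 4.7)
The plan is to take $Q(\eta)$ to be the model set built on the lattice $\Lambda$ whose window is the closed $\eta$-neighborhood of $\partial W$, namely
\[W_\eta = \{w\in\R^m : \dist(w,\partial W)\le \eta\},\]
and then verify the two required properties separately: the inclusion $(\Gamma+v)\Delta\Gamma \subset Q(\eta)$ for every $v \in \mathcal N(\eta)$, and the upper bound on the uniform density $D^+(Q(\eta))$.

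For the inclusion, I would write $v = p_2(\lambda_3)$ with $\lambda_3 \in \Lambda_3$ and $\|p_1(\lambda_3)\|_\infty \le \eta$, and distinguish two cases. If $x \in (\Gamma+v) \setminus \Gamma$, then $x = p_2(\mu + \lambda_3)$ for some $\mu \in \Lambda$ with $p_1(\mu) \in W$; since $x \notin \Gamma$, the explicit representative $\mu + \lambda_3 \in \Lambda$ cannot have its first projection in $W$, so $p_1(\mu + \lambda_3) = p_1(\mu) + p_1(\lambda_3) \notin W$. This point then sits outside $W$ at distance at most $\eta$ from $p_1(\mu) \in W$, hence in $W_\eta$, witnessing $x \in Q(\eta)$. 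The symmetric case $x \in \Gamma \setminus (\Gamma+v)$ is treated identically by writing $x - v = p_2(\mu - \lambda_3)$ and deducing $p_1(\mu) \in W_\eta$ from $p_1(\mu - \lambda_3) \notin W$. This algebraic step is essentially a one-line calculation on lattice representatives.

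For the density, I would use the decomposition $\Lambda = \Lambda_2 \oplus \Lambda_3$ set up earlier: since $p_1(\Lambda_3)$ is dense in $V = \Vect(p_1(\Lambda_3))$, the closure of $p_1(\Lambda)$ in $\R^m$ is the countable union of affine subspaces $p_1(\lambda_2) + V$ for $\lambda_2 \in \Lambda_2$. A standard lattice-packing argument then bounds the count of $\mu \in \Lambda$ with $p_1(\mu) \in W_\eta$ and $p_2(\mu) \in B(x,R)$ uniformly in $x$ by a constant (depending on $\Lambda$) times $\Vol(B_R)\cdot\sum_{\lambda_2 \in \Lambda_2} \Leb_V\bigl(W_\eta \cap (V + p_1(\lambda_2))\bigr)$, and each term of this sum tends to $0$ as $\eta \to 0$ by the regularity of $W$ applied to the affine subspace $V + p_1(\lambda_2)$. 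Dominated convergence, controlling the tail via the finite-measure part of $W$, then gives $D^+(Q(\eta)) \to 0$, so any sufficiently small $\eta$ works.

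The main obstacle will be this second step: the uniformity in the ball center $x$ required by $D^+$ (as opposed to a lim-inf style averaged density) forces the packing bound to be global rather than averaged, and the summation over $\Lambda_2$ must be tamed carefully. A minor technical point is that the statement demands $Q(\eta)$ be a \emph{regular} model set, which requires $W_\eta$ itself to satisfy the regularity condition; if this is not immediate from its definition, I would replace $W_\eta$ by a slightly larger, manifestly regular enveloping set of comparable measure, which does not affect the rest of the argument.
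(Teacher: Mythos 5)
Your proposal follows essentially the same route as the paper: the same choice of window (a thickened boundary of $W$ --- the paper takes $W_\eta = \partial W + (V\cap B_\eta)$ with $V=\Vect(p_1(\Lambda_3))$, thickening only along $V$, which is slightly smaller than your full $\eta$-neighbourhood but plays exactly the same role), the same one-line computation on lattice representatives for the inclusion $(\Gamma+v)\Delta\Gamma\subset Q(\eta)$, and the same density estimate via $\sum_{\lambda_2\in\Lambda_2}\Leb_{V+p_1(\lambda_2)}\big(W_\eta\cap(V+p_1(\lambda_2))\big)\to 0$ combined with the equidistribution of $p_1(\Lambda_3)$ in $V$. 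The paper's treatment of the density step (and of the regularity of $Q(\eta)$) is no more detailed than yours, so your outline matches its proof.
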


We have now reduced the proof of Theorem~\ref{ModelAlmost} to that of Lemma \ref{Lem2ModelAlmost}.
\end{proof}

\begin{proof}[Proof of Lemma \ref{Lem2ModelAlmost}]
We begin by proving that $(\Gamma+v)\setminus\Gamma\subset Q(\eta)$ when $v\in \mathcal N(\eta)$. As $v\in \mathcal N(\eta)$, there exists $\lambda_0\in \Lambda_3$ such that $p_2(\lambda_0)=v$ and $\|p_1(\lambda_0)\|_\infty\le \eta$.

If $x\in \Gamma+v$, then $x=p_2(\lambda_2+\lambda_3)+p_2(\lambda_0)=p_2(\lambda_2+\lambda_3+\lambda_0)$ where $\lambda_2 \in \Lambda_2$, $\lambda_3 \in \Lambda_3$ and $p_1(\lambda_2+\lambda_3)\in W$. If moreover $x\notin \Gamma$, it implies that $p_1(\lambda_2+\lambda_3+\lambda_0)\notin W$. Thus, $p_1(\lambda_2+\lambda_3+\lambda_0)\in W_\eta$, where (recall that $V = \Vect (p_1(\Lambda_3))$)
\[W_\eta = \big\{k+w\mid k\in\partial W, w\in V\cap B_\eta\big\}.\]
We have proved that $(\Gamma+v)\setminus\Gamma\subset Q(\eta)$, where
\[Q(\eta) =\big\{p_2(\lambda)\mid \lambda\in \Lambda,\,p_1(\lambda)\in W_\eta\big\}.\]
Let us stress that the model set $Q(\eta)$ does not depend on $v$. We now observe that as $W$ is regular, we have
\[\sum_{\lambda_2\in\Lambda_2} \Leb_{V+p_1(\lambda_2)}\big(W_\eta\cap (V+p_1(\lambda_2))\big)\underset{\eta\to 0}{\longrightarrow}0.\]
As $p_1(\Lambda_3)$ is dense in $V$ (thus, it is equidistributed), the uniform upper density of the model set $Q(\eta)$ defined by the window $W_\eta$ can be made smaller than $\varep$ by taking $\eta$ small enough.

The treatment of $\Gamma\setminus(\Gamma+v)$ is similar; this ends the proof of Lemma~\ref{Lem2ModelAlmost}.
\end{proof}

\section{Almost periodic patterns are weakly almost periodic sets}

In this section, we prove that almost periodic pattern are weakly almost periodic.

\begin{prop}\label{WeakAlmPer}
Every almost periodic pattern is weakly almost periodic.
\end{prop}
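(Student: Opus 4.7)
The plan: given $\varep>0$ and points $x,y\in\R^n$, I use almost-periodicity to find an $\varep/2$-almost-period $u$ of $\Gamma$ close to $y-x$ (within the relative density constant $M$ of $\mathcal N_{\varep/2}^{R_0}$), then take $v=u$ in the definition of weak almost periodicity. The comparison of $B(x,R)\cap\Gamma$ with $(B(y,R)\cap\Gamma)-v=B(y-u,R)\cap(\Gamma-u)$ then splits into a boundary error (coming from the fact that $B(x,R)$ and $B(y-u,R)$ differ by a shell of $L^\infty$-thickness at most $M$) and a bulk error (from $\Gamma$ and $\Gamma-u$ differing on the bulk of $B(x,R)$).

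Concretely, apply the almost-periodic pattern property with parameter $\varep/2$ to obtain $R_0>0$ and a relative density constant $M>0$ for $\mathcal N_{\varep/2}^{R_0}$. Using the uniform discreteness of $\Gamma$ (which yields a uniform upper bound on the density of $\Gamma$ and of any translate of $\Gamma$), the cardinality of $(\Gamma\cup(\Gamma-u))\cap\bigl(B(0,R+M)\setminus B(0,R-M)\bigr)$ is $O(MR^{n-1})=o(\Vol B_R)$, so one can choose $R\ge R_0$ large enough that this shell count is $<(\varep/2)\Vol B_R$. Given $x,y\in\R^n$, pick any $u\in\mathcal N_{\varep/2}^{R_0}\cap B(y-x,M)$ (nonempty by relative density), set $v=u$ and $x':=y-u$, so $\|x-x'\|_\infty\le M$ and $(B(y,R)\cap\Gamma)-v=B(x',R)\cap(\Gamma-u)$. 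A routine case analysis yields
\[
\bigl(B(x,R)\cap\Gamma\bigr)\Delta\bigl(B(x',R)\cap(\Gamma-u)\bigr)
\;\subset\;
\bigl(B(x,R)\Delta B(x',R)\bigr)\cap\bigl(\Gamma\cup(\Gamma-u)\bigr)
\;\cup\;
B(x,R)\cap\bigl(\Gamma\Delta(\Gamma-u)\bigr).
\]
The first set on the right is contained in the thin shell estimated above, hence contributes at most $(\varep/2)\Vol B_R$ points. For the second set, the identity $(\Gamma-u)\Delta\Gamma=\bigl((\Gamma+u)\Delta\Gamma\bigr)-u$ together with the translation invariance of $D_R^+$ gives $D_R^+((\Gamma-u)\Delta\Gamma)=D_R^+((\Gamma+u)\Delta\Gamma)<\varep/2$, since $u\in\mathcal N_{\varep/2}^{R_0}$ and $R\ge R_0$. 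Summing the two contributions bounds the symmetric difference by $\varep\cdot\Vol B_R$, which is exactly the condition for weak almost periodicity.

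The main obstacle is the quantifier ordering: the bulk estimate is valid only for $R\ge R_0$, a threshold coming from the almost-period, while the boundary estimate needs $R$ to be large relative to $M$, the relative density constant. Both $R_0$ and $M$ must be fixed first (from the almost-periodic-pattern property applied to $\varep/2$), and one must then verify that the boundary term $O(M/R)$ can be made smaller than $\varep/2$ by a single choice of $R$ that works uniformly in $x,y$; this is where the passage from the uniform-in-$v$ definition of almost periodic pattern to the pointwise-in-$x,y$ definition of weak almost periodicity actually takes place.
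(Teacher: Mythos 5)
Your proposal is correct and follows essentially the same route as the paper: choose an almost-period $u$ within the relative-density constant of the required displacement, then split the symmetric difference into a bulk term controlled by $D_R^+\big((\Gamma+u)\Delta\Gamma\big)$ and a thin-shell term controlled by uniform discreteness (the paper's Figure \ref{DiffCub} covering argument). The only cosmetic differences are that the paper first reduces to $x=0$ and accepts a final bound of $2\varep$, whereas you treat general $x,y$ directly and calibrate with $\varep/2$; your handling of the quantifier ordering matches the paper's "increasing $R_\varep$ if necessary" step.
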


We do not know if the converse is true or not (that is, if there exists weakly almost periodic sets that are not almost periodic patterns). See also the addendum \cite{GM} of \cite{MR2876415} for more details on the subject.

\begin{proof}[Proof of Proposition \ref{WeakAlmPer}]
We prove that an almost periodic pattern satisfies Equation \eqref{EqWeakAlmPer} for $x=0$, the general case being obtained by applying this result twice.

Let $\Gamma$ be an almost periodic pattern and $\varep>0$. Then by definition, there exists $R_\varep>0$ and a relatively dense set $\mathcal N_\varep$ (for a parameter $R_{\mathcal N_\varep}>0$) such that
\begin{equation}\label{eq2.1}
\forall R\ge R_\varep,\  \forall v\in\mathcal N_\varep,\  D_{R}^+\big( (\Gamma+v)\Delta \Gamma \big) <\varep.
\end{equation}
Moreover, as $\Gamma$ is Delone, there exists $r_\Gamma>0$ such that each ball with radius smaller than $r_\Gamma$ contains at most one point of $\Gamma$.

\begin{figure}[b]
\begin{center}
\begin{tikzpicture}[scale=1]
\draw[fill=gray!20!white, opacity=0.3] (-1.8,-1.8) rectangle (2.2,2.2);
\draw[fill=gray!20!white, opacity=0.3] (-1.4,-1.4) rectangle (2.6,2.6);
\node (A) at (.2,.2) {$\times$};
\draw (A) node[below right]{$y$};
\node (B) at (.4,.4) {$\times$};
\draw (B) node[above right]{$v_y$};
\foreach\i in {-3,...,7}{
\foreach\j in {6,...,7}{
\draw[color=gray] (\i*.38,\j*.38) rectangle (\i*.38-.38,\j*.38-.38);
\draw[color=gray] (\i*.38-0.38,\j*.38-3.8) rectangle (\i*.38-.76,\j*.38-3.8-.38);
}}
\foreach\i in {-4,...,-3}{
\foreach\j in {-4,...,6}{
\draw[color=gray] (\i*.38,\j*.38) rectangle (\i*.38-.38,\j*.38-.38);
\draw[color=gray] (\i*.38+3.8-0.38,\j*.38+0.38) rectangle (\i*.38+3.8,\j*.38);
}}
\draw[thick] (-1.8,-1.8) rectangle (2.2,2.2);
\draw[thick] (-1.4,-1.4) rectangle (2.6,2.6);
\end{tikzpicture}
\caption[Covering the set $B(y,R)\Delta B(v_y,R)$ by cubes of radius $r_\Gamma$]{Covering the set $B(y,R)\Delta B(v_y,R)$ by cubes of radius $r_\Gamma$.}\label{DiffCub}
\end{center}
\end{figure}
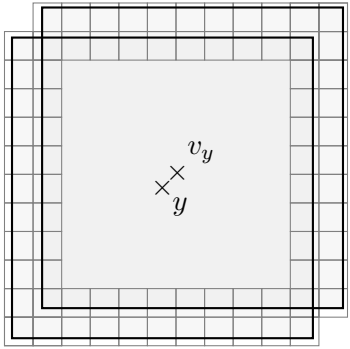

As $\mathcal N_\varep$ is relatively dense, for every $y\in \R^n$, there exists $v_y\in -\mathcal N_\varep$ such that $d_\infty(y, v_y)<R_{\mathcal N_\varep}$. This $v_y$ is the vector $v$ we look for to have the property of Definition~\ref{wap}. Indeed, by triangle inequality, for every $R\ge R_\varep$, we have
\begin{align}\label{EqTriangleIn}
\card\Big(\big(B(0,R)\cap\Gamma\big) & \Delta \big((B(y,R)\cap\Gamma)-v_y\big)\Big)\nonumber\\
         \le & \card\Big(\big(B(0,R)\cap\Gamma\big) \Delta\big((B(v_y,R)\cap\Gamma) - v_y\big)\Big)\\
				     & + \card\Big(\big(B(v_y,R)\cap\Gamma\big) \Delta\big(B(y,R)\cap\Gamma\big)\Big).\nonumber
\end{align}
By Equation~\eqref{eq2.1}, the first term of the right side of the inequality is smaller than $\varep\Vol(B_R)$. It remains to bound the second one.

For every $y\in \R^n$, as $d_\infty(y, v_y)<R_{\mathcal N_\varep}$, the set $B(y,R)\Delta B(v_y,R)$ is covered by
\[\frac{2n(R+r_\Gamma)^{n-1}(R_{\mathcal N_\varep} + r_\Gamma)}{r_\Gamma^n}\]
disjoint cubes of radius $r_\Gamma$ (see Figure \ref{DiffCub}). Thus, as each one of these cubes contains at most one point of $\Gamma$, this implies that
\[\card\Big(\big(B(y,R)\Delta B(v_y,R)\big)\cap \Gamma\Big) \le 2n\frac{(R+r_\Gamma)^{n-1}(R_{\mathcal N_\varep} + r_\Gamma)}{r_\Gamma^n}.\]
Increasing $R_\varep$ if necessary, for every $R\ge R_\varep$, we have 
\[2n\frac{(R+r_\Gamma)^{n-1}(R_{\mathcal N_\varep} + r_\Gamma)}{r_\Gamma^n} \le \varep \Vol(B_R),\]
so	
\[\card\Big(\big(B(y,R)\Delta B(v_y,R)\big)\cap \Gamma\Big) \le \varep \Vol(B_R).\]
This bounds the second term of Equation~\eqref{EqTriangleIn}. We finally get
\[\card\Big(\big(B(0,R)\cap\Gamma\big) \Delta\big(B(y,R)\cap\Gamma-v_y\big)\Big) \le 2 \varep \Vol(B_R),\]
which proves the proposition.
\end{proof}

We now state an easy lemma which asserts that for $\varep$ small enough, the set of translations $\mathcal N_\varep$ is ``stable under additions with a small number of terms''. We will use this lemma in the part concerning discretizations of linear maps.

\begin{lemme}\label{arithProg}
Let $\Gamma$ be an almost periodic pattern, $\varep>0$ and $\ell\in\N$. Then if we set $\varep'=\varep/\ell$ and denote by $\mathcal N_{\varep'}$ the set of translations of $\Gamma$ and $R_{\varep'}>0$ the corresponding radius for the parameter $\varep'$, then for every $k\in\llbracket 1,\ell \rrbracket$ and every $v_1,\cdots,v_\ell\in\mathcal N_{\varep'}$, we have
\[\forall R\ge R_{\varep'},\  D_R^+\Big( \big(\Gamma+\sum_{i=1}^\ell v_i\big)\Delta \Gamma \Big) <\varep.\]
\end{lemme}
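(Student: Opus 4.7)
The plan is to reduce the statement to the defining property of $\mathcal N_{\varep'}$ by chaining $\ell$ (or $k$) elementary estimates via a triangle inequality for the pseudodistance $D_R^+$. The key observation is that the map $(\Gamma,\Gamma')\mapsto D_R^+(\Gamma\Delta \Gamma')$ is a translation-invariant pseudodistance on discrete subsets of $\R^n$: the inclusion $A\Delta C\subset (A\Delta B)\cup(B\Delta C)$ gives
\[D_R^+(A\Delta C)\le D_R^+(A\Delta B)+D_R^+(B\Delta C),\]
and since $(A+w)\Delta(B+w)=(A\Delta B)+w$ and $D_R^+$ is defined as a supremum over all $x\in\R^n$ of the normalised count in $B(x,R)$, translating both sets by the same vector does not change its value.

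Given $v_1,\dots,v_\ell\in\mathcal N_{\varep'}$ and $k\in\llbracket 1,\ell\rrbracket$, set $A_0=\Gamma$ and $A_j=\Gamma+v_1+\cdots+v_j$ for $1\le j\le k$. The telescoping triangle inequality yields
\[D_R^+\bigl(A_k\Delta A_0\bigr)\le \sum_{j=1}^{k} D_R^+\bigl(A_j\Delta A_{j-1}\bigr) = \sum_{j=1}^{k} D_R^+\bigl((\Gamma+v_j)\Delta\Gamma\bigr),\]
where the last equality uses translation invariance applied to the common translate $v_1+\cdots+v_{j-1}$. By the defining property of $\mathcal N_{\varep'}$ (Definition \ref{DefAlmPer}), each $v_j\in\mathcal N_{\varep'}$ satisfies $D_R^+\bigl((\Gamma+v_j)\Delta\Gamma\bigr)<\varep'$ for every $R\ge R_{\varep'}$. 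Summing and using $k\le\ell$ and $\varep'=\varep/\ell$ gives
\[D_R^+\bigl(A_k\Delta\Gamma\bigr)<k\varep'\le\ell\varep'=\varep,\]
which is the desired bound.

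There is essentially no obstacle in this argument; the only minor subtlety is verifying the triangle inequality and translation invariance of $D_R^+$ at the start, both of which are formal. Everything else is a bookkeeping induction on $k$ (or equivalently a telescoping sum) applied to the elementary estimate that already belongs to $\mathcal N_{\varep'}$.
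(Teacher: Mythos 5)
Your proof is correct and follows essentially the same route as the paper: a telescoping triangle inequality for $D_R^+$ over the partial sums $\Gamma+v_1+\cdots+v_j$, combined with translation invariance to reduce each increment to $D_R^+\big((\Gamma+v_j)\Delta\Gamma\big)<\varep'$, then summing $k\le\ell$ terms. Nothing is missing.
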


\begin{proof}[Proof of Lemma \ref{arithProg}]
Let $\Gamma$ be an almost periodic pattern, $\varep>0$, $\ell\in\N$, $R_0>0$ and $\varep'=\varep/\ell$. Then there exists $R_{\varep'}>0$ such that
\[\forall R\ge R_{\varep'},\  \forall v\in\mathcal N_{\varep'},\  D_R^+\big( (\Gamma+v)\Delta \Gamma \big) <\varep'.\]
We then take $1\le k\le\ell$, $v_1,\cdots,v_k\in\mathcal N_{\varep'}$ and compute
\begin{align*}
D_R^+\Big( \big(\Gamma+\sum_{i=1}^k v_i\big)\Delta \Gamma \Big) & \le \sum_{m=1}^k D_R^+\Big( \big(\Gamma+\sum_{i=1}^m v_i\big)\Delta \big(\Gamma+\sum_{i=1}^{m-1} v_i\big) \Big)\\
             & \le \sum_{m=1}^k D_R^+\Big( \big((\Gamma+v_m)\Delta \Gamma\big) + \sum_{i=1}^{m-1} v_i \Big).
\end{align*}
By the invariance under translation of $D_R^+$, we deduce that
\begin{align*}
D_R^+\Big( \big(\Gamma+\sum_{i=1}^k v_i\big)\Delta \Gamma \Big) & \le \sum_{m=1}^k D_R^+ \big((\Gamma+v_m)\Delta \Gamma\big)\\
						 & \le k\varep'.
\end{align*}
As $k\le \ell$, this ends the proof.
\end{proof}

\begin{rem}\label{arithProg2}
In particular, this lemma implies that the set $\mathcal N_\varep$ contains arbitrarily large patches of lattices of $\R^n$: for every almost periodic pattern $\Gamma$, $\varep>0$ and $\ell\in\N$, there exists $\varep'>0$ such that for every $k_i \in \llbracket -\ell,\ell\rrbracket$ and every $v_1,\cdots,v_n\in\mathcal N_{\varep'}$, we have
\[\forall R\ge R_{\varep'},\  D_{R}^+\Big( \big(\Gamma+\sum_{i=1}^n k_iv_i \big)\Delta \Gamma \Big) <\varep.\]
\end{rem}

We end this section by a proposition stating that the set of $\varep$-almost-periods of an almost periodic pattern possess some almost periodicity.

\begin{prop}\label{azur}
For every $\varep>0$, the set $\mathcal N_\varep$ of $\varep$-almost-periods of an almost periodic pattern $\Gamma$ (see Definition~\ref{DefAlmPer}) contains a relatively dense almost periodic pattern.
\end{prop}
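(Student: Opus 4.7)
The plan is to exhibit $\Gamma^{\ast} := \mathcal N_{\varep/2}^{R_{\varep/2}}$ as a relatively dense almost periodic pattern sitting inside $\mathcal N_\varep$. The inclusion $\Gamma^{\ast}\subset \mathcal N_\varep$ is immediate from $\mathcal N_{\varep/2}\subset \mathcal N_\varep$, and relative density comes from Definition~\ref{DefAlmPer} applied to the parameter $\varep/2$.

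I would first reduce to the case of small $\varep$. By Propositions~\ref{WeakAlmPer} and~\ref{limitexist}, $\Gamma$ has a positive uniform density $D(\Gamma)$. If $u\neq 0$ with $|u|<r_\Gamma$, then $\Gamma\cap(\Gamma+u)=\emptyset$ by uniform discreteness of $\Gamma$, so $\card((\Gamma+u)\Delta\Gamma \cap B_R)\sim 2D(\Gamma)\Vol(B_R)$ as $R\to\infty$; hence once $\varep<2D(\Gamma)$, no nonzero element of $\mathcal N_\varep$ can lie in $B(0,r_\Gamma)$. Shrinking $\varep$ is harmless, since any almost periodic pattern inside a smaller $\mathcal N_{\varep_{\text{new}}}$ automatically lies inside the original $\mathcal N_\varep$. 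With $\varep$ this small, Lemma~\ref{arithProg} (with $\ell=2$) gives $\mathcal N_{\varep/2}+\mathcal N_{\varep/2}\subset \mathcal N_\varep$, and translation invariance of $D_R^+$ shows $\Gamma^{\ast}$ is symmetric under negation, so $\Gamma^{\ast}-\Gamma^{\ast}\subset \mathcal N_\varep$; distinct points of $\Gamma^{\ast}$ are therefore at distance at least $r_\Gamma$, making $\Gamma^{\ast}$ a Delone set.

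The central step is almost periodicity of $\Gamma^{\ast}$. Given $\delta>0$, my candidate set of $\delta$-almost-periods of $\Gamma^{\ast}$ is $\mathcal N_{\varep'}^{R_{\varep'}}$ of the original $\Gamma$ for $\varep'$ small, which is relatively dense by hypothesis. A triangle inequality on symmetric differences, combined with translation invariance of $D_R^+$, shows that for $\gamma\in\Gamma^{\ast}$ and $v\in \mathcal N_{\varep'}^{R_{\varep'}}$ one has $\gamma+v\in \mathcal N_{\varep/2+\varep'}^{\max(R_{\varep/2},R_{\varep'})}$. Consequently $(\Gamma^{\ast}+v)\setminus\Gamma^{\ast}$ (and symmetrically $\Gamma^{\ast}\setminus(\Gamma^{\ast}+v)$) is contained in $\mathcal N_{\varep/2+\varep'}\setminus\mathcal N_{\varep/2}^{R_{\varep/2}}$, so the whole almost periodicity statement reduces to showing that this set difference has small upper uniform density when $\varep'$ is small.

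This density control is the main obstacle. I would handle it by monotonicity: the function $\varep\mapsto \limsup_{R\to\infty} D_R^+\bigl(\mathcal N_\varep^R\bigr)$ is non-decreasing and, by the reduction above, bounded once $\varep$ is small enough to guarantee uniform discreteness, so it is continuous outside a countable set; replacing $\varep$ by a continuity point near $\varep/2$ forces the density of $\mathcal N_{\varep/2+\varep'}\setminus\mathcal N_{\varep/2}^{R_{\varep/2}}$ to tend to $0$ as $\varep'\to 0$, which yields the required estimate. A conceptually cleaner but heavier alternative is the equicontinuous structure of the hull of an almost periodic pattern: the orbit closure of $\Gamma$ under translations is a compact abelian group, $\mathcal N_\varep$ is essentially the preimage of a neighborhood of the identity under the orbit map, and approximating this group by a finite-dimensional quotient realises a relatively dense subset of $\mathcal N_\varep$ as a genuine model set, which is an almost periodic pattern by Theorem~\ref{ModelAlmost}.
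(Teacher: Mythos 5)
Your overall strategy is the same as the paper's: take $\varep'$ small, observe that $\mathcal N_{\varep'}$ translates $\mathcal N_{\varep/2}$ into $\mathcal N_{\varep/2+\varep'}$, and control the density of $\mathcal N_{\varep/2+\varep'}\setminus\mathcal N_{\varep/2}$ by choosing the parameter at a continuity point of a monotone density-in-$\varep$ function. Your preliminary verification that $\mathcal N_\varep$ is uniformly discrete for $\varep<2D(\Gamma)$ (via $\mathcal N_{\varep/2}+\mathcal N_{\varep/2}\subset\mathcal N_\varep$ and the density of $\Gamma\cup(\Gamma+u)$ for small $u\neq 0$) is correct and is actually a point the paper leaves implicit, so that part is a genuine plus.

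However, the density-control step has a real gap. The quantity you must bound, by Definition~\ref{DefAlmPer}, is the \emph{uniform} density $D_R^+\bigl(\mathcal N_{\varep/2+\varep'}\setminus\mathcal N_{\varep/2}\bigr)=\sup_x \card\bigl((\mathcal N_{\varep/2+\varep'}\setminus\mathcal N_{\varep/2})\cap B(x,R)\bigr)/\Vol(B_R)$. Continuity of $\varep\mapsto\limsup_R D_R^+(\mathcal N_\varep)$ only tells you that $\sup_x a_R(x)-\sup_x b_R(x)$ is small, where $a_R(x)$ and $b_R(x)$ are the counts of the two nested sets in $B(x,R)$; it does not bound $\sup_x\bigl(a_R(x)-b_R(x)\bigr)$, since the two suprema may be attained at very different centers $x$ (and you cannot invoke a uniform density for $\mathcal N_\varep$ without circularity, as that is essentially what you are trying to establish). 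The paper closes exactly this gap with an extra step: it works with the density $f(\varep)=\limsup_R\card(B_R\cap\mathcal N_\varep)/\Vol(B_R)$ of balls centred at the \emph{origin}, where the count of the set difference really is the difference of the counts; then, for an arbitrary centre $x$, it picks $x'\in\mathcal N_{\delta^2/2}$ with $\|x-x'\|\le R_{\mathcal N_{\delta^2/2}}$ (relative density) and uses the sandwich $\mathcal N_{\varep-\delta^2/2}\subset\mathcal N_\varep-x'\subset\mathcal N_{\varep+\delta^2}$ to translate the count over $B(x',R)$ back to $B(0,R)$ at the cost of widening the parameter window, which is then absorbed by choosing $\varep$ in the quantitative continuity set $\mathcal E_\delta$. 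You need this (or an equivalent uniformisation argument) for your proof to go through; without it the claim that the difference set has small \emph{uniform} upper density is unjustified. A secondary, smaller issue: your difference set $\mathcal N_{\varep/2+\varep'}^{\max(R_{\varep/2},R_{\varep'})}\setminus\mathcal N_{\varep/2}^{R_{\varep/2}}$ also contains points of $\mathcal N_{\varep/2}\setminus\mathcal N_{\varep/2}^{R_{\varep/2}}$ (same $\varep$, larger uniformity radius), which continuity in $\varep$ alone does not see; this is why the paper additionally restricts to $\varep$ at which $R_\varep$ can be taken locally constant. Finally, the alternative via the hull as a compact abelian group is only a slogan here: for the weak notion of almost periodic pattern used in this paper it is not established that the hull carries such a structure, so it cannot substitute for the missing estimate.
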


\begin{proof}[Proof of Proposition~\ref{azur}]
Let $\Gamma$ be an almost periodic pattern. Then, by definition, for every $\varep>0$ we can choose $R_\varep>0$ such that the set $\mathcal N_\varep^{R_\varep}$ (defined by Equation~\eqref{EqAlmPer}) is relatively dense.

Consider the map
\[f : \varep \mapsto \limsup_{R\to +\infty} \frac{\card\big(B_R\cap \mathcal N_\varep\big)}{\Vol(B_R)}.\]
As the sets $\mathcal N_\varep$ are decreasing for inclusion, the function $f$ is increasing. Without loss of generality (for example by applying a suitable homothety to the set $\Gamma$), we can suppose that $f(1)=1$. Thus, if we set
\[\mathcal E_\delta = \big\{\varep\in[0,1] \mid \forall \varep'\in[\varep-\delta^2,\varep+\delta^2], |f(\varep)-f(\varep')| <\delta \big\},\]
then an easy calculation shows that the set defined by
\[\mathcal E = \bigcup_{M>0}\bigcap_{m\ge M}\mathcal E_{2^{-m}},\]
satisfies $\Leb(\mathcal E) = 1$. In particular, $\mathcal E$ is dense in $[0,1]$.
\bigskip

Let $\varep\in\mathcal E$ such that $R_\varep$ is locally constant around  $\varep$, $\delta>0$ and $R >0$. Taking a smaller $\delta$ if necessary, we can suppose that $\varep\in\mathcal E_\delta$. We will show that $\mathcal N_{\delta^2/2}$ is a set of $\delta$-translations of the set $\mathcal N_{\varep}$.

Let $w\in\mathcal N_{\delta^2/2}$, and denote
\[A = \frac{\card\Big(\big((\mathcal N_\varep + w)\Delta \mathcal N_\varep\big) \cap B(x,R)\Big)}{\Vol(B_R)},\]
then, we have:
\begin{align*}
A = & \frac{\card\Big(\big((\mathcal N_\varep + w)\setminus \mathcal N_\varep\big) \cap B(x,R)\Big)}{\Vol(B_R)} + \frac{\card\Big(\big(\mathcal N_\varep\setminus (\mathcal N_\varep + w)\big) \cap B(x,R)\Big)}{\Vol(B_R)}\\
  = & \frac{\card\Big(\big((\mathcal N_\varep + w)\setminus \mathcal N_\varep\big) \cap B(x,R)\Big)}{\Vol(B_R)} + \frac{\card\Big(\big((\mathcal N_\varep-w)\setminus \mathcal N_\varep\big) \cap B(x-w,R)\Big)}{\Vol(B_R)}.
\end{align*}
As $w\in\mathcal N_{\delta^2/2}$, and so $-w$ (by Definition~\ref{DefAlmPer}, the sets $\mathcal N_*$ are symmetric), we have $\mathcal N_\varep \pm w \subset \mathcal N_{\varep+\delta^2/2}$ (see Lemma~\ref{arithProg}). Thus,
\begin{equation}\label{label0}
A \le \frac{\card\Big(\big(\mathcal N_{\varep+\delta^2/2}\setminus \mathcal N_\varep\big) \cap B(x,R)\Big)}{\Vol(B_R)} + \frac{\card\Big(\big(\mathcal N_{\varep+\delta^2/2}\setminus \mathcal N_\varep\big) \cap B(x-w,R)\Big)}{\Vol(B_R)}.
\end{equation}

To prove the proposition, it suffices to prove that the first term of the previous bound, denoted by
\[B = \frac{\card\Big(\big(\mathcal N_{\varep+\delta^2/2}\setminus \mathcal N_\varep\big) \cap B(x,R)\Big)}{\Vol(B_R)},\]
is smaller than $3\delta$ for every $x\in \R^n$. As $\mathcal N_{\delta^2/2}$ is relatively dense, there exists $x'\in \mathcal N_{\delta^2/2}$ such that $\|x-x'\| \le R_{\mathcal N_{\delta^2/2}}$ (where $R_{\mathcal N_{\delta^2/2}}$ denotes the constant of ``relative denseness'' of $\mathcal N_{\delta^2/2}$). Thus, 
\begin{equation}\label{label1}
B \le \frac{\card\Big(\big(\mathcal N_{\varep+\delta^2/2}\setminus \mathcal N_\varep\big) \cap B(x',R)\Big)}{\Vol(B_R)} + \frac{\card\Big(\mathcal N_{\varep+\delta^2/2} \cap \big(B(x',R) \Delta B(x,R)\big)\Big)}{\Vol(B_R)}.
\end{equation}

As $x'\in\mathcal N_{\delta^2/2}$, we have $\mathcal N_{\varep+\delta^2/2} - x' \subset \mathcal N_{\varep+\delta^2}$ and $\mathcal N_{\varep} - x' \supset \mathcal N_{\varep-\delta^2/2}$. Then, the first term of the previous bound \eqref{label1} satisfies 
\[\frac{\card\Big(\big(\mathcal N_{\varep+\delta^2/2}\setminus \mathcal N_\varep\big) \cap B(x',R)\Big)}{\Vol(B_R)} \le \frac{\card\Big(\big(\mathcal N_{\varep+\delta^2}\setminus \mathcal N_{\varep-\delta^2/2}\big) \cap B(0,R)\Big)}{\Vol(B_R)}.\]
As $\varep\in\mathcal E_\delta$, we have $f(\varep+\delta^2)-f(\varep-\delta^2/2) \le \delta$. Thus, there exists $R_0>0$ (that depends only on $\delta$) such that for every $R>R_0$, we have  
\begin{equation}\label{label2}
\frac{\card\Big(\big(\mathcal N_{\varep+\delta^2/2}\setminus \mathcal N_\varep\big) \cap B(x',R)\Big)}{\Vol(B_R)} \le 2\delta.
\end{equation}

For its part, the second term of Equation~\eqref{label1} can be bounded by applying the same strategy as in the proof of Proposition \ref{WeakAlmPer} (see also Figure~\ref{DiffCub}): there exists $R_1>0$ (which depends only on the constant of ``uniform discreteness'' of $\mathcal N_{\varep+\delta^2/2}$ and of $\delta$) such that for every $R\ge R_1$, we have
\begin{equation}\label{label3}
\frac{\card\Big(\mathcal N_{\varep+\delta^2/2} \cap \big(B(x',R) \Delta B(x,R)\big)\Big)}{\Vol(B_R)} \le \delta.
\end{equation}

Finally, combining Equations~\eqref{label1}, \eqref{label2} and \eqref{label3}, we get that for every $x\in\R^n$ and every $R\ge \max(R_0,R_1)$ (the maximum depending only on $\delta$),
\[\frac{\card\Big(\big(\mathcal N_{\varep+\delta^2/2}\setminus \mathcal N_\varep\big) \cap B(x,R)\Big)}{\Vol(B_R)}\le 3\delta,\]
which proves that (by Equation~\eqref{label0})
\[\frac{\card\Big(\big((\mathcal N_\varep + w)\Delta \mathcal N_\varep\big) \cap B(x,R)\Big)}{\Vol(B_R)} \le 6\delta.\]
\bigskip

To conclude, there exists a dense subset $\mathcal E$ of $[0,1]$ such that for every $\varep\in\mathcal E$, the set $\mathcal N_\varep$ is an almost periodic pattern. As the sets $\mathcal N_\varep$ are decreasing for inclusion, and each one is relatively dense, we obtain the conclusion of the proposition.
\end{proof}

\section{Weakly almost periodic sets possess a density}

In this section we show that weakly almost periodic sets have a regular enough behaviour at the infinity to possess a density.

\begin{definition}
For a discrete set $\Gamma\subset \R^n$ and $R\ge 1$, we recall that the uniform $R$-density is defined as\index{$D_R^+$}
\[D_R^+(\Gamma) = \sup_{x\in\R^n} \frac{\card\big(B(x,R)\cap \Gamma\big)}{\Vol(B_R)};\]
the uniform upper density is\index{$D^+$}
\[D^+(\Gamma) = \limsup_{R\to +\infty} D_R^+(\Gamma).\]
\end{definition}

Remark that if $\Gamma\subset \R^n$ is Delone for the parameters $r_\Gamma$ and $R_\Gamma$, then its upper density satisfies:
\[\frac{1}{(2R_\Gamma+1)^n} \le D^+(\Gamma) \le \frac{1}{(2r_\Gamma+1)^n}.\]

\begin{prop}\label{limitexist}
Every weakly almost periodic set possesses a uniform density (see Definition~\ref{defunif}).
\end{prop}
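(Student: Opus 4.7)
The plan is to derive the uniform density from the weak almost periodicity in two conceptual steps. First I would observe that, once $R$ is large enough, the local density function $d_R(x) := \card(B(x,R)\cap\Gamma)/\Vol(B_R)$ is almost constant in $x$: fixing $\varep > 0$ and the corresponding $R_\varep$ from Definition~\ref{wap}, for every pair $x,y \in \R^n$ the hypothesis provides $v \in \R^n$ such that $\card\big((B(x,R_\varep)\cap\Gamma) \Delta ((B(y,R_\varep)\cap\Gamma)-v)\big) \le \varep\Vol(B_{R_\varep})$. Since translation preserves cardinality and symmetric differences control differences of cardinalities, one concludes $|d_{R_\varep}(x) - d_{R_\varep}(y)| \le \varep$ uniformly in $x, y$.

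Second, I would combine this with a tiling argument to compare $d_R$ across scales. Fix $x$ and let $R' \gg R_\varep$. Partition, up to a boundary layer, the $\ell^\infty$-cube $B(x, R')$ into translates $B(x_i, R_\varep)$ arranged on a regular grid of side $2R_\varep$. The boundary layer has width at most $R_\varep$ and hence Lebesgue volume $O(R'^{n-1} R_\varep)$; since $\Gamma$ is uniformly discrete, it contains $O(R'^{n-1} R_\varep)$ points of $\Gamma$. Summing over interior cells and using the first step to replace each $\card(B(x_i, R_\varep) \cap \Gamma)$ by $d_{R_\varep}(x)\Vol(B_{R_\varep}) + O(\varep\Vol(B_{R_\varep}))$, one obtains
$$\card(B(x, R') \cap \Gamma) = d_{R_\varep}(x)\,\Vol(B_{R'}) + O(\varep\,\Vol(B_{R'})) + O(R'^{n-1}R_\varep).$$
Dividing through by $\Vol(B_{R'})$ yields $|d_{R'}(x) - d_{R_\varep}(x)| \le \varep + O(R_\varep/R')$, and for $R'$ sufficiently large (depending only on $\varep$) the right-hand side drops below $2\varep$, uniformly in $x$.

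To conclude, the two previous steps together show that for each $\varep > 0$ there is $R_\varep' \ge R_\varep$ such that the oscillation of $d_{R}(x)$ as $x$ varies over $\R^n$ and $R$ varies over $[R_\varep', \infty)$ is bounded by $O(\varep)$. Fixing $x$ and letting $\varep \to 0$ forces $d_R(x)$ to be Cauchy, hence to converge to some limit $D(x)$. The Step~1 estimate $|d_R(x) - d_R(y)| \le \varep$ passes to the limit, so $D(x) = D(y)$ is a constant $D(\Gamma)$; and Step~2 gives $|d_R(x) - D(\Gamma)| \le O(\varep)$ for all $R \ge R_\varep'$, uniformly in $x$. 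This is precisely the uniform density required by Definition~\ref{defunif}.

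The main obstacle I anticipate is merely the bookkeeping in the tiling step: one must handle both the boundary layer of $B(x, R')$ and the fact that $R'/R_\varep$ is in general not an integer, absorbing both contributions into the $O(R_\varep/R')$ term. This is where the uniform discreteness of $\Gamma$ plays its role, bounding the number of points of $\Gamma$ in the boundary region in terms of its volume. No further use of weak almost periodicity beyond Step~1 is needed.
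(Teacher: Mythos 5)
Your proposal is correct and follows essentially the same route as the paper's proof: both extract from Definition~\ref{wap} that the count in every cube of radius $R_\varep$ is within $O(\varep)\Vol(B_{R_\varep})$ of a common value, then tile the large ball $B(x,R')$ by such cubes and absorb the boundary discrepancy (which the paper handles via the $\lfloor M\rfloor^n$ versus $\lceil M\rceil^n$ cube counts rather than via uniform discreteness) to compare densities across scales. Your explicit Cauchy argument at the end merely spells out what the paper leaves implicit.
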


We have defined the concept of weakly almost periodic set because it seemed to us that it was the weakest to imply the existence of a uniform density.

%

\begin{rem}\label{Jordan}
The same proof also shows that the same property holds if instead of considering the density $D^+$, we take a \emph{Jordan-measurable}\footnote{We say that a set $J$ is Jordan-measurable if for every $\varep>0$, there exists $\eta>0$ such that there exists two disjoint unions $\mathcal C$ and $\mathcal C'$ of cubes of radius $\eta$, such that $\mathcal C\subset J\subset\mathcal C'$, and that $\Leb(\mathcal C'\setminus\mathcal C)<\varep$.} set $J$ and consider the density $D_J^+(\Gamma)$ of a set $\Gamma\subset \Z^n$ defined by\index{$D_A^+$}
\[D_J^+(\Gamma) = \limsup_{R\to +\infty} \sup_{x\in\R^n} \frac{\card\big(J_R\cap \Gamma\big)}{\Vol(J_R)},\]
where $J_R$ denotes the set of points $x\in\R^n$ such that $R^{-1}x\in J$.
\end{rem}

\begin{proof}[Proof of Proposition \ref{limitexist}]
Let $\Gamma$ be a weakly almost periodic set and $\varep>0$. Then, by definition, there exists $R>0$ such that for all $x,y\in\R^n$, there exists $v\in \R^n$ such that Equation \eqref{EqWeakAlmPer} holds. We take a ``big'' $M\in\R$, $x\in\R^n$ and $R'\ge MR$. We use the tiling of $\R^n$ by the collection of squares $\{B(Ru,R)\}_{u\in (2\Z)^n}$ and the Equation \eqref{EqWeakAlmPer} (applied to the radius $R'$ and the points $0$ and $Ru$) to find the number of points of $\Gamma$ that belong to $B(x,R')$: as $B(x,R')$ contains at least $\lfloor M\rfloor^n$ disjoint cubes $B(Ru,R)$ and is covered by at most $\lceil M\rceil^n$ such cubes, we get (recall that $B_R = B(0,R)$)
\begin{flalign*}
\frac{\lfloor M\rfloor^n \big(\card (B_R\cap\Gamma)-2\varep\Vol(B_R)\big)}{\lceil M\rceil^n\Vol(B_R)} \le \frac{\card\big(B(x,R')\cap \Gamma\big)}{\Vol(B_R)} \le & & &
\end{flalign*}
\begin{flalign*}
 & & & \frac{\lceil M\rceil^n \big(\card (B_R\cap\Gamma)+2\varep\card (B_R \cap \Z^n)\big)}{\lfloor M\rfloor^n\Vol(B_R)},
\end{flalign*}
thus
\begin{flalign*}
\frac{\lfloor M\rfloor^n}{\lceil M\rceil^n}\left(\frac{\card (B_R\cap\Gamma)}{\Vol(B_R)}-2\varep\right) \le \frac{\card\big(B(x,R')\cap \Gamma\big)}{\Vol(B_R)} \le & & &
\end{flalign*}
\begin{flalign*}
 & & & \qquad \qquad \qquad \qquad \frac{\lceil M\rceil^n}{\lfloor M\rfloor^n}\left(\frac{\card (B_R\cap\Gamma)}{\Vol(B_R)}+2\varep\right).
\end{flalign*}
For $M$ large enough, this ensures that for every $R'\ge MR$ and every $x\in \R^n$, the density
\[\frac{\card\big(B(x,R')\cap \Gamma\big)}{\Vol(B_R)}\quad  \text{is close to}  \quad \frac{\card (B_R\cap\Gamma)}{\Vol(B_R)};\]
this finishes the proof of the proposition.
\end{proof}

\section{Discretizations of linear maps}

In this section, we prove that the notions of almost periodic pattern and model set are adapted to discretizations of linear maps.

For the case of model sets, we have the following (trivial) result.

\begin{prop}\label{ImgModel}
Let $\Gamma$ be a model set and $A \in GL_n(\R)$ be an invertible map. Then the set $\widehat A (\Gamma)$ (see Definition~\ref{DefDiscrLin}) is a model set.
\end{prop}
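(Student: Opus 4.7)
The plan is to enlarge the internal space of the cut-and-project scheme defining $\Gamma$ so as to absorb the rounding operation $\pi$ into a new, larger lattice. Writing $\Gamma=\{p_2(\lambda)\mid\lambda\in\Lambda,\ p_1(\lambda)\in W\}$ with $\Lambda\subset\R^{m+n}$ a lattice, $W\subset\R^m$ measurable, and $p_1,p_2$ the canonical projections, I would build a new scheme in $\R^{m+2n}=\R^m\times\R^n\times\R^n$ whose associated model set is precisely $\widehat A(\Gamma)$.

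Concretely, I would set
\[
\Lambda^{\ast}=\bigl\{\bigl(p_1(\lambda),\, A p_2(\lambda)-k,\, k\bigr)\mid \lambda\in\Lambda,\ k\in\Z^n\bigr\}\subset\R^{m+2n},
\]
take $p_1^\ast:\R^{m+2n}\to\R^{m+n}$ to be the canonical projection onto the first two coordinate blocks and $p_2^\ast:\R^{m+2n}\to\R^n$ the projection onto the last block, and choose the window $W^\ast = W\times(-1/2,1/2]^n \subset \R^{m+n}$, which is measurable since $W$ is. A point $\mu=(p_1(\lambda),Ap_2(\lambda)-k,k)\in\Lambda^\ast$ satisfies $p_1^\ast(\mu)\in W^\ast$ if and only if $p_1(\lambda)\in W$ and $Ap_2(\lambda)-k\in(-1/2,1/2]^n$; by the convention of Definition~\ref{DefDiscrLin} the second condition means exactly that $k=\pi(Ap_2(\lambda))$. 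Applying $p_2^\ast$ then recovers
\[
\{\pi(Ap_2(\lambda))\mid \lambda\in\Lambda,\ p_1(\lambda)\in W\}=\widehat A(\Gamma),
\]
realising $\widehat A(\Gamma)$ as the model set attached to the data $(\Lambda^\ast, W^\ast, p_1^\ast, p_2^\ast)$.

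The only step really requiring justification, and the one where the invertibility of $A$ enters, is that $\Lambda^\ast$ is a lattice of $\R^{m+2n}$. I would view it as the image of the product lattice $\Lambda\times\Z^n\subset\R^{m+n}\times\R^n$ under the linear map $T(\lambda,k)=(p_1(\lambda),Ap_2(\lambda)-k,k)$. Injectivity of $T$ is the key check: if $T(\lambda,k)=0$, the third block gives $k=0$, the second then gives $Ap_2(\lambda)=0$ and hence $p_2(\lambda)=0$ since $A\in GL_n(\R)$, and the first gives $p_1(\lambda)=0$, whence $\lambda=0$. Being injective between vector spaces of dimension $m+2n$, $T$ is a linear automorphism of $\R^{m+2n}$, and therefore sends the lattice $\Lambda\times\Z^n$ to the lattice $\Lambda^\ast$.

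The argument is essentially coordinate bookkeeping once the right enlargement has been spotted, so I do not foresee a substantive obstacle. The conceptual point worth underlining is that the hypothesis $A\in GL_n(\R)$ is used precisely to ensure injectivity of $T$: without it, distinct pairs $(\lambda,k)$ could collapse to the same element of $\R^{m+2n}$ and $\Lambda^\ast$ might fail to be discrete.
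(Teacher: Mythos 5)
Your construction is exactly the one in the paper: the lattice $\Lambda^{\ast}=\{(p_1(\lambda),Ap_2(\lambda)-k,k)\}$ is precisely the lattice spanned by the paper's matrix $\bigl(\begin{smallmatrix} B_1 & \\ AB_2 & -\Id \\ & \Id\end{smallmatrix}\bigr)$, with the same window $W\times\,]-\tfrac12,\tfrac12]^n$. The only difference is that you spell out the (correct) verification that $\Lambda^{\ast}$ is indeed a lattice, which the paper leaves implicit.
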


\begin{proof}[Proof of Proposition~\ref{ImgModel}]
Let $\Gamma$ be a model set modelled on a lattice $\Lambda\subset \R^{m+n}$ and a window $W\subset\R^m$. Let $B_1\in M_m(\R)$ and $B_2\in M_n(\R)$ such that the lattice $\Lambda$ is spanned by the matrix
\[\begin{pmatrix}
B_1\\B_2
\end{pmatrix}.\]
Then $\widehat A (\Gamma)$ is the model set modelled on the window $W' = W\times ]-\frac 12,\frac 12]^n$ and the lattice spanned by the matrix
\[\begin{pmatrix}
B_1  & \\
AB_2 & -\Id\\
     & \Id
\end{pmatrix}.\]
\end{proof}

In particular, the image of $\Z^n$ by the discretizations of the matrix $A_!,\cdots,A_k$ is the model set modelled on the window $W = ]-\frac 12,\frac 12]^{nk}$ and the lattice spanned by $M_{A_1,\cdots,A_k}\Z^{n(k+1)}$, where
\[ M_{A_1,\cdots,A_k} = \begin{pmatrix}
A_1 & -\Id &        &        & \\
    & A_2  & -\Id   &        & \\
    &      & \ddots & \ddots & \\
    &      &        & A_k    & -\Id\\
    &      &        &        & \Id
\end{pmatrix}\in M_{n(k+1)}(\R).\]
\bigskip

Concerning almost periodic patterns, we prove that the image of an almost periodic pattern by the discretization of a linear map is still an almost periodic pattern.	

\begin{theoreme}\label{imgquasi}
Let $\Gamma\subset\Z^n$ be an almost periodic pattern and $A\in GL_n(\R)$. Then the set $\widehat A(\Gamma)$ is an almost periodic pattern.
\end{theoreme}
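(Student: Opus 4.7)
The plan is to produce, for each $\varep>0$, a relatively dense set of $\varep$-almost-periods of $\widehat A(\Gamma)$ by transporting almost-periods of $\Gamma$ through $A$ and rounding to $\Z^n$. Fix $\varep>0$, and choose (to be specified later) a small $\eta>0$ and a large integer $\ell$. Apply Remark~\ref{arithProg2} to the pair $(\varep,\ell)$ to obtain $\varep'>0$ and the corresponding relatively dense set $\mathcal N_{\varep'}$ of $\varep'$-almost-periods of $\Gamma$, with associated radius $R_{\varep'}$. A preliminary observation that keeps everything on the integer lattice is that, for $\varep'<2D(\Gamma)$, one has $\mathcal N_{\varep'}\subset\Z^n$: if $v\notin\Z^n$, then $\Gamma+v\subset\Z^n+v$ is disjoint from $\Z^n\supset\Gamma$, forcing $D_R^+((\Gamma+v)\Delta\Gamma)\to 2D(\Gamma)$ as $R\to\infty$.

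\smallskip

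Next, pick $n$ vectors $v_1,\dots,v_n\in\mathcal N_{\varep'}$ and consider the $(2\ell+1)^n$ combinations $\sum_{i=1}^n k_iv_i$ with $|k_i|\le\ell$, each of which belongs to $\mathcal N_\varep^{R_\varep}$ by Remark~\ref{arithProg2}. Reducing the images $A\sum k_i v_i$ modulo $\Z^n$ in the torus $\R^n/\Z^n$ and applying Dirichlet's pigeonhole with cells of side $\eta$ yields a non-zero combination $v$ (with $|k_i|\le 2\ell$) such that $d(Av,\Z^n)<\eta$, provided $\ell\gtrsim\eta^{-1}$. Set $w=\pi(Av)\in\Z^n$, so that $Av=w+\delta$ with $\|\delta\|_\infty<\eta<\tfrac12$. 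Sliding the base-point $(v_1,\dots,v_n)$ throughout the relatively dense set $\mathcal N_{\varep'}$ produces a relatively dense collection of such integer vectors $w$, which will serve as candidate $\varep$-translations of $\widehat A(\Gamma)$.

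\smallskip

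To show that each such $w$ is in fact an $\varep$-almost-period, note that for every $x\in\Z^n$,
\[\pi\bigl(A(x+v)\bigr)=\pi(Ax+w+\delta)=w+\pi(Ax+\delta),\]
and $\pi(Ax\pm\delta)=\pi(Ax)$ unless $Ax$ lies within $\eta$ of the cube-boundary set $B=\{y\in\R^n\mid \exists i,\exists k\in\Z,\ y_i=k+\tfrac12\}$; call such $x$ \emph{bad}. Unwinding set-membership, one checks that $\widehat A(\Gamma)\,\Delta\,(\widehat A(\Gamma)+w)$ is contained in $\widehat A\bigl(\Gamma\,\Delta\,(\Gamma+v)\bigr)\cup\widehat A\bigl(\{x\in\Gamma:\text{$x$ bad}\}\bigr)$ together with a symmetric term. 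The first contribution has $D_R^+$ at most a constant (depending on $A$) times $\varep$, since $v\in\mathcal N_\varep^{R_\varep}$ and $\widehat A$ does not inflate densities by more than a factor depending on $A$. For the second, the $\eta$-neighbourhood of $B$ has Lebesgue density at most $2n\eta$ in $\R^n$, and its $A^{-1}$-preimage is Jordan-measurable (cf.\ Remark~\ref{Jordan}). Since $\Gamma$ has a uniform density (Proposition~\ref{limitexist}), a cube-counting argument bounds the density of bad points in every ball $B(y,R)$ by $C_A\eta$. Choosing $\eta$ so that $C_A\eta<\varep$ then yields the desired estimate on $D_R^+$.

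\smallskip

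The main obstacle is the cube-counting step just above: one must convert a Lebesgue-volume bound for the slab $A^{-1}(B+B(0,\eta))$ into a uniform pointwise counting bound on $\Gamma$. When $A\Z^n$ equidistributes on the torus, this follows from classical Weyl equidistribution together with the uniform density of $\Gamma$. In the opposite case, one passes to the quotient by the rational invariant sublattice, on which the image of $\Z^n$ under $\widehat A$ is itself a model set (Proposition~\ref{ImgModel}) and hence possesses a uniform density by Corollary~\ref{corolimitexist2}; this reduces to the equidistributed case and closes the proof.
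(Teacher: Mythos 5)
Your strategy is the paper's: use a pigeonhole argument to produce $\varep$-translations $v$ of $\Gamma$ with $Av$ close to $\Z^n$, then show that the rounding error $\delta=Av-\pi(Av)$ moves only a small-density set of points. But there are genuine gaps at exactly the two delicate points. The most serious is your treatment of the rational rows. If the $i$-th row of $A$ is rational with common denominator $q$, the values $(Ax)_i$, $x\in\Z^n$, lie in $\frac{1}{q}\Z$ and may sit \emph{exactly} on the rounding discontinuity $\Z+\frac12$ for a sublattice of $x$'s of positive density independent of $\eta$ (take $A=\operatorname{Diag}(1/2,1,\dots,1)$: half of the integer points are ``bad'' for every $\eta>0$). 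Your Lebesgue bound $2n\eta$ on the slab around $B$ therefore does \emph{not} convert into a counting bound $C_A\eta$, and no appeal to uniform densities --- of $\Gamma$, or of $\widehat A(\Z^n)$ on a quotient via Proposition~\ref{ImgModel} and Corollary~\ref{corolimitexist2} --- can change this: in the rational case the bad set really does have density bounded away from $0$. The theorem survives because those points do not actually move: one must arrange $\delta_i=0$ for every $i\in I_\Q(A)$, so that the rational coordinates are not perturbed at all and only the irrational rows, where Weyl equidistribution applies, contribute to the symmetric difference. This is precisely the extra clause ``$(Av)_i\in\Z$ for $i\in I_\Q(A)$'' in Lemma~\ref{tiroir}, obtained there by restricting to translations in $q\Z^n$ (and the paper's equidistribution lemma is correspondingly stated only for perturbations vanishing on those coordinates). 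In your setup it would follow by taking $\eta<\frac{1}{2q}$, but it must be said, and your notion of ``bad'' point must be redefined row by row accordingly.

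The second gap is relative denseness. The pigeonhole hands you \emph{one} good combination $v=\sum k_iv_i$ per tuple $(v_1,\dots,v_n)$, and ``sliding the base-point throughout $\mathcal N_{\varep'}$'' gives no control on the gaps between the resulting vectors $w=\pi(Av)$: you have exhibited elements of the set of $\varep$-translations of $\widehat A(\Gamma)$, not shown that this set has bounded gaps. What Lemma~\ref{tiroir} actually proves is that \emph{every} $v\in\mathcal N_{\varep'}$ admits a correction $v''$ of norm at most $\ell R_0$, uniformly in $v$, with $A(v+v'')$ close to $\Z^n$; the uniform bound comes from approximating the residue set $\{Av\bmod\Z^n : v\in\mathcal N_{\varep'}\}$ by the residues coming from a fixed ball $B_{R_0}$, and it is this bounded correction that makes the corrected set relatively dense. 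Finally, a smaller point: even on the irrational rows, the uniform density of $\Gamma$ (Proposition~\ref{limitexist}) is not the right tool --- a set of positive uniform density can be entirely contained in the slab $A^{-1}(B+B_\eta)$, since the integer points of that slab form a model set, hence an almost periodic pattern. The bound you need follows instead from the trivial inclusion of the bad points of $\Gamma$ in the bad points of $\Z^n$, together with equidistribution modulo $1$ of $(Ax)_i$, $x \in \Z^n$, for $i\notin I_\Q(A)$; this uses only $\Gamma\subset\Z^n$.
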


In particular, for every lattice $\Lambda$ of $\R^n$, the set $\pi(\Lambda)$ is an almost periodic pattern. More generally, given a sequence $(A_k)_{k\ge 1}$ of invertible matrices of $\R^n$, the successive images $(\widehat{A_k}\circ\cdots\circ \widehat{A_1})(\Z^n)$ are almost periodic patterns. See Figure~\ref{ImagesSuitesMat} for an example of the successive images of $\Z^2$ by a random sequence of bounded matrices of $SL_2(\R)$.

\begin{figure}[t]
\begin{minipage}[c]{.33\linewidth}
	\includegraphics[width=\linewidth, trim = 1.5cm .5cm 1.5cm .5cm,clip]{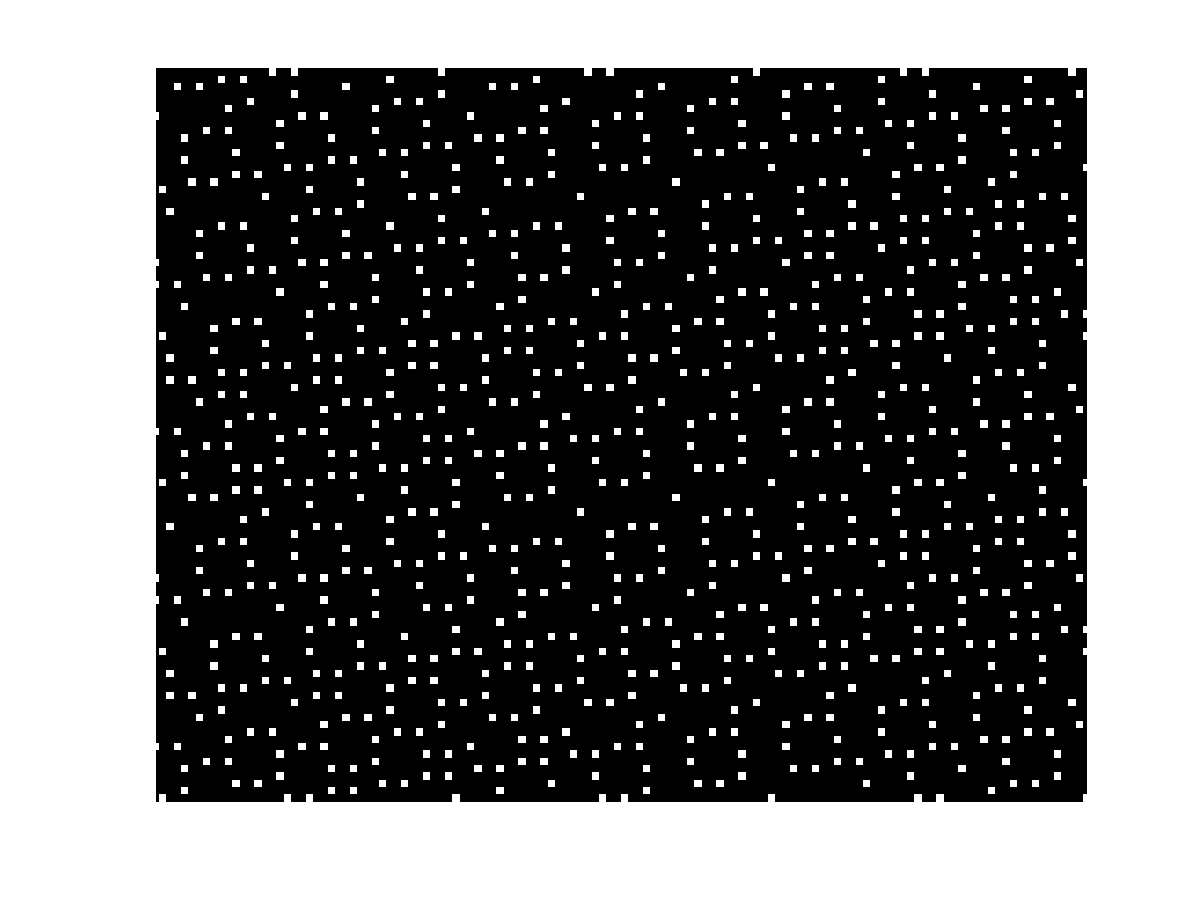}
\end{minipage}\hfill
\begin{minipage}[c]{.33\linewidth}
	\includegraphics[width=\linewidth, trim = 1.5cm .5cm 1.5cm .5cm,clip]{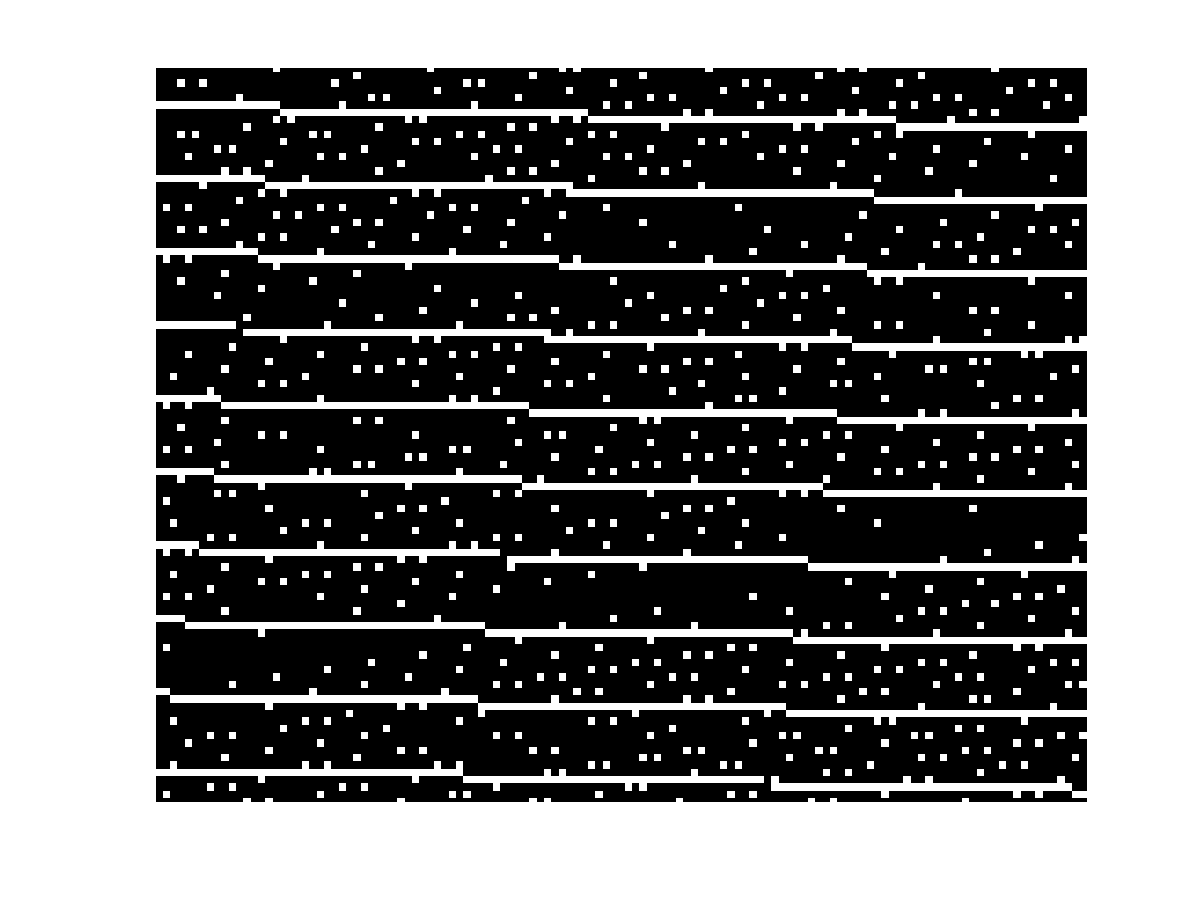}
\end{minipage}\hfill
\begin{minipage}[c]{.33\linewidth}
	\includegraphics[width=\linewidth, trim = 1.5cm .5cm 1.5cm .5cm,clip]{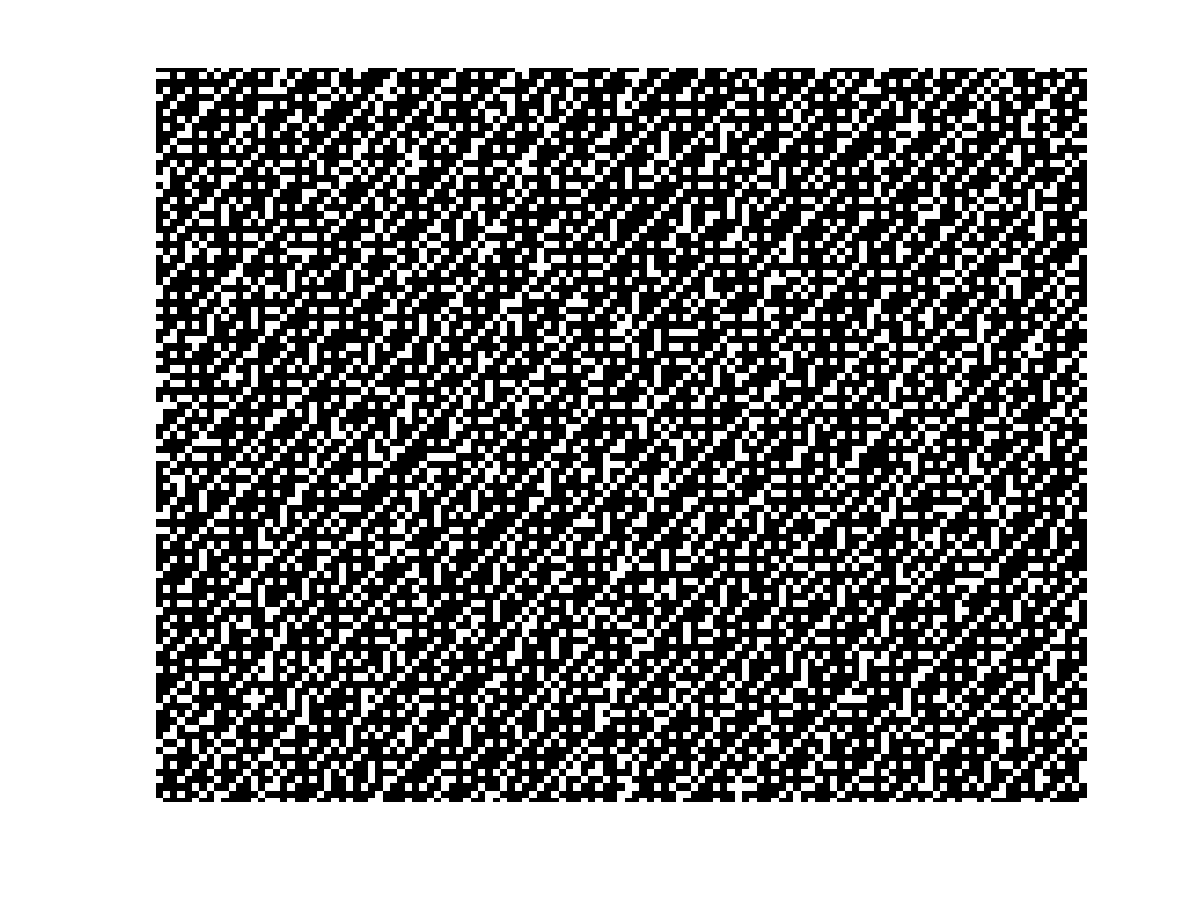}
\end{minipage}

\begin{minipage}[c]{.33\linewidth}
	\includegraphics[width=\linewidth, trim = 1.5cm .5cm 1.5cm .5cm,clip]{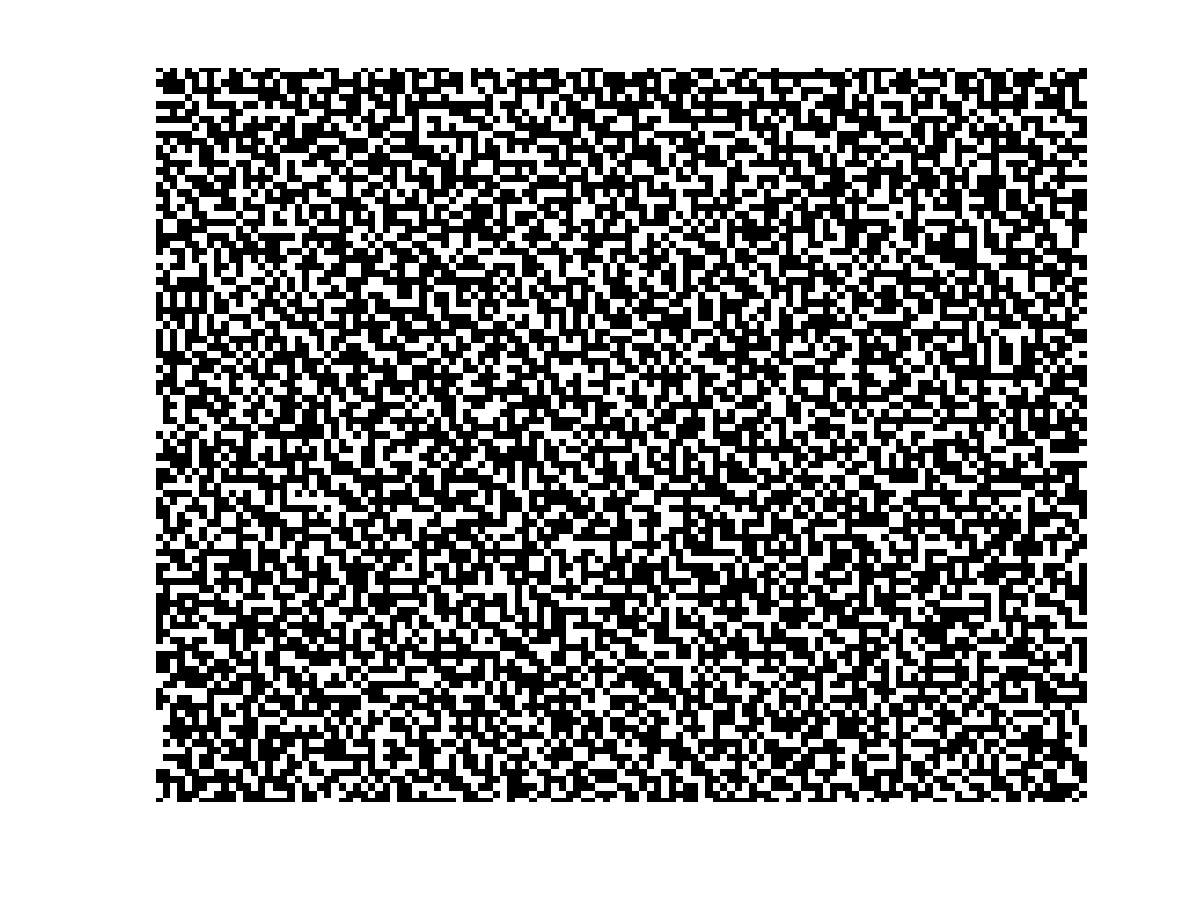}
\end{minipage}\hfill
\begin{minipage}[c]{.33\linewidth}
	\includegraphics[width=\linewidth, trim = 1.5cm .5cm 1.5cm .5cm,clip]{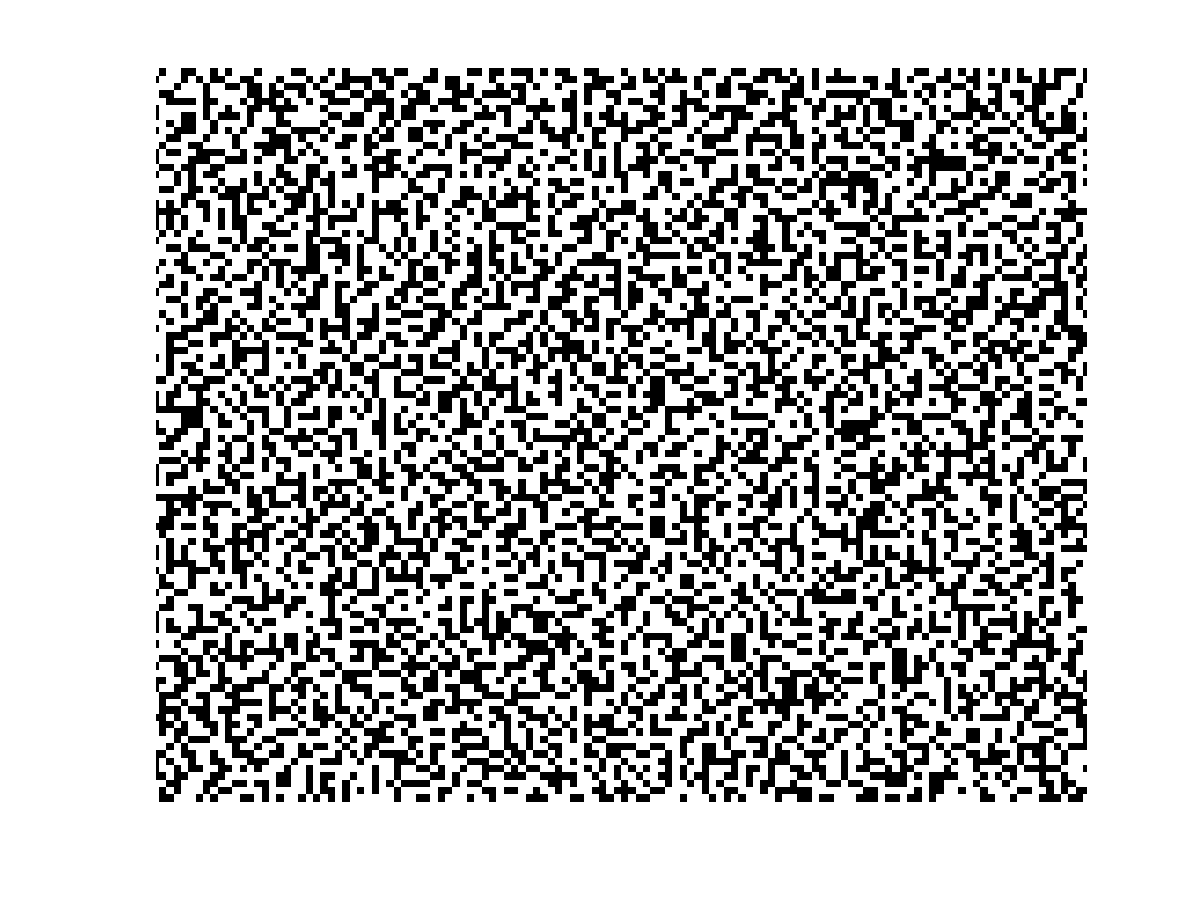}
\end{minipage}\hfill
\begin{minipage}[c]{.33\linewidth}
	\includegraphics[width=\linewidth, trim = 1.5cm .5cm 1.5cm .5cm,clip]{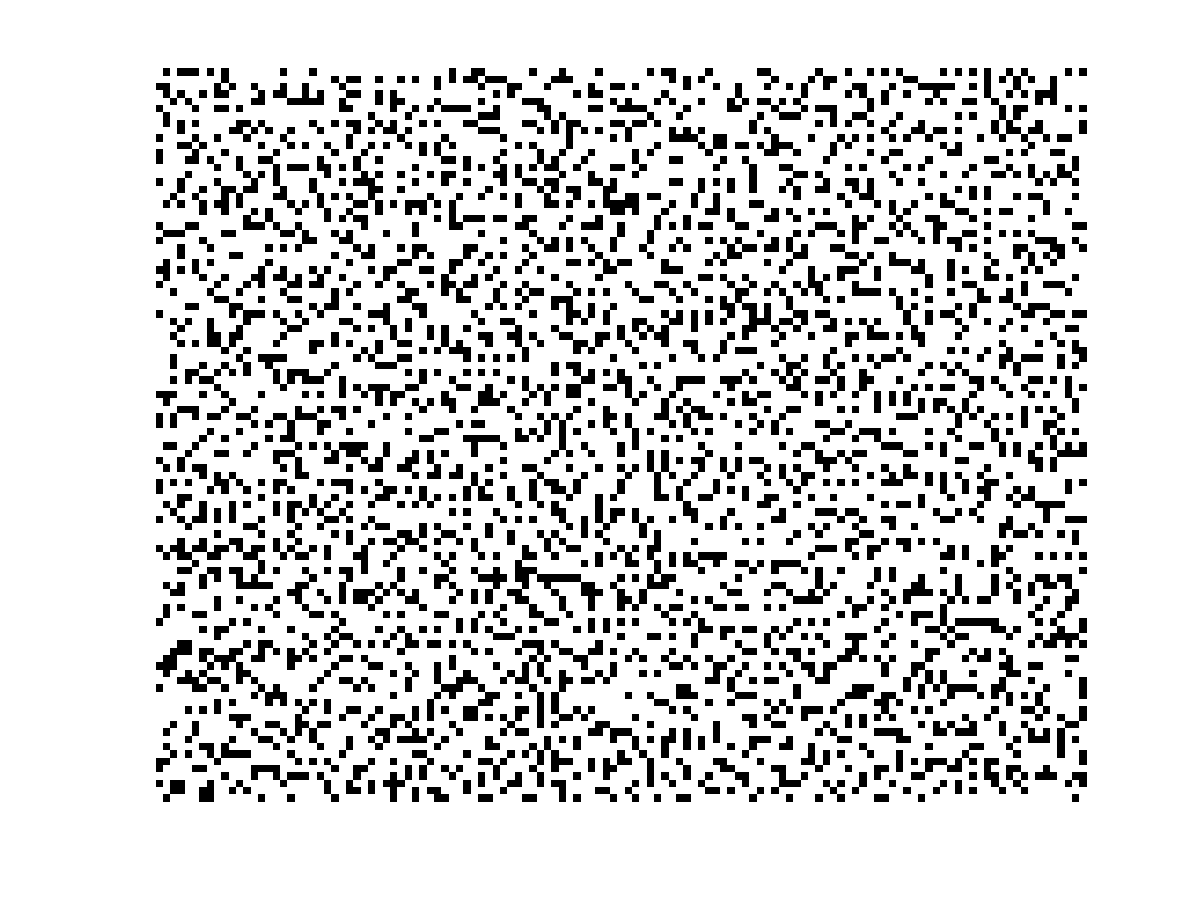}
\end{minipage}
\caption[Successive images of $\Z^2$ by discretizations of random matrices]{Successive images of $\Z^2$ by discretizations of random matrices in $SL_2(\R)$, a point is black if it belongs to $(\widehat{A_k}\circ\cdots\circ\widehat{A_1})(\Z^2)$. The $A_i$ are chosen randomly, using the singular value decomposition: they are chosen among the matrices of the form $R_\theta D_t R_{\theta'}$, with $R_\theta$ the rotation of angle $\theta$ and $D_t$ the diagonal matrix $\operatorname{Diag}(e^t,e^{-t})$, the $\theta$, $\theta'$ being chosen uniformly in $[0,2\pi]$ and $t$ uniformly in $[-1/2,1/2]$. From left to right and top to bottom, $k=1,\, 2,\, 3,\, 5,\, 10,\, 20$.}\label{ImagesSuitesMat}
\end{figure}

\begin{notation}\label{intelligent}
For $A\in GL_n(\R)$, we denote $A=(a_{i,j})_{i,j}$. We denote by $I_\Q(A)$\index{$I_\Q(A)$} the set of indices $i$ such that $a_{i,j}\in\Q$ for every $j\in\llbracket 1,n\rrbracket$
\end{notation}

The proof of Theorem~\ref{imgquasi} relies on the following remark:

\begin{rem}\label{pmtriv}
If $a\in\Q$, then there exists $q\in\N^*$ such that $\{ax\mid x\in\Z\}\subset \frac{1}{q}\Z$. On the contrary, if $a\in\R\setminus\Q$, then the set $\{ax\mid x\in\Z\}$ is equidistributed in $\R/\Z$.
\end{rem}

Thus, in the rational case, the proof will lie in an argument of periodicity. On the contrary, in the irrational case, the image $A(\Z^n)$ is equidistributed modulo $\Z^n$: on every large enough domain, the density does not move a lot when we perturb the image set $A(\Z^n)$ by small translations. This reasoning is formalized by Lemmas~\ref{tiroir} and \ref{équi}. 

More precisely, for $R$ large enough, we would like to find vectors $w$ such that $D^+_R\big((\pi(A\Gamma) +w)\Delta \pi(A\Gamma)\big)$ is small. We know that there exists vectors $v$ such that $D^+_R\big((\Gamma+v)\Delta\Gamma\big)$ is small; this implies that $D^+_R\big((A\Gamma+Av)\Delta A\Gamma\big)$ is small, thus that $D^+_R\big(\pi(A\Gamma+Av)\Delta \pi(A\Gamma)\big)$ is small. The problem is that in general, we do not have $\pi(A\Gamma+Av) = \pi(A\Gamma)+\pi(Av)$. However, this is true if we have $Av\in\Z^n$. Lemma~\ref{tiroir} shows that in fact, it is possible to suppose that $Av$ ``almost'' belongs to $\Z^n$, and Lemma~\ref{équi} asserts that this property is sufficient to conclude.

The first lemma is a consequence of the pigeonhole principle.

\begin{lemme}\label{tiroir}
Let $\Gamma\subset \Z^n$ be an almost periodic pattern, $\varep>0$, $\delta>0$ and $A\in GL_n(\R)$. Then we can suppose that the elements of $A(\mathcal N_\varep)$ are $\delta$-close to $\Z^n$. More precisely, there exists $R_{\varep,\delta}>0$ and a relatively dense set $\widetilde{\mathcal N}_{\varep,\delta}$\index{$\widetilde{\mathcal N}_{\varep,\delta}$} such that 
\[\forall R\ge R_{\varep,\delta},\  \forall v\in\widetilde{\mathcal N}_{\varep,\delta},\  D_R^+\big( (\Gamma+v)\Delta \Gamma \big) <\varep,\]
and that for every $v\in\widetilde{\mathcal N}_{\varep,\delta}$, we have $d_\infty(Av,\Z^n)<\delta$. Moreover, we can suppose that for every $i\in I_\Q(A)$ and every $v\in\widetilde{\mathcal N}_{\varep,\delta}$, we have $(Av)_i\in \Z$.
\end{lemme}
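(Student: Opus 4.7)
The proof is an application of the pigeonhole principle on the torus $\R^n/\Z^n$, combined with the arithmetic-progression stability of $\mathcal N_\varep$ provided by Lemma~\ref{arithProg} and Remark~\ref{arithProg2}.

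First, I would deal with the rational rows. By Remark~\ref{pmtriv}, for each $i\in I_\Q(A)$ the linear form $v\mapsto(Av)_i=\sum_j a_{i,j}v_j$ takes values in $\frac{1}{q_i}\Z$ on $\Z^n$, where $q_i$ is the l.c.m.\ of the denominators of the $i$-th row of $A$. The condition $(Av)_i\in\Z$ for all $i\in I_\Q(A)$ therefore cuts out a finite-index sublattice $\Lambda_\Q\subset\Z^n$. Since by Remark~\ref{arithProg2} the set $\mathcal N_{\varep'}$ contains arbitrarily large patches of any prescribed full-rank sublattice of $\Z^n$ for $\varep'$ small enough, the intersection $\mathcal N_{\varep'}\cap\Lambda_\Q$ remains relatively dense in $\R^n$, and we may look for $v\in\Lambda_\Q$.

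Second, I would apply pigeonhole to produce ``short'' $\varep$-translations whose $A$-image lies $\delta$-close to $\Z^n$. Fix $\ell\in\N^*$ (to be chosen below) and, using Remark~\ref{arithProg2}, set $\varep''=\varep/(n(2\ell+1))$ so that any integer combination $\sum_{i=1}^n k_i u_i$ with $|k_i|\le\ell$ of elements $u_i\in\mathcal N_{\varep''}$ stays in $\mathcal N_\varep$. For $R_1$ sufficiently large, the ball $B(0,R_1)$ contains at least $N=\lceil 1/\delta\rceil^n+1$ points $v_1,\dots,v_N$ of $\mathcal N_{\varep''}\cap\Lambda_\Q$; partitioning the torus into $\lceil 1/\delta\rceil^n$ cubes of side $\delta$, two of the images $Av_a,Av_b$ land in a common cube. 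The difference $w_0=v_a-v_b$ then belongs to $\mathcal N_{\varep}\cap\Lambda_\Q$ (sum of two elements of $\mathcal N_{\varep''}$, Lemma~\ref{arithProg}), satisfies $\|w_0\|_\infty\le 2R_1$, and $\|Aw_0\|_{\R^n/\Z^n}<\delta$. Iterating the same pigeonhole in transverse directions yields linearly independent vectors $w_1,\dots,w_n$ with the same properties.

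Finally, to obtain a relatively dense set of translations: for any $x\in\R^n$, pick $u_x\in\mathcal N_{\varep''}\cap\Lambda_\Q$ within bounded distance of $x$ (by relative density), and correct $u_x$ by a bounded integer combination $\sum_{i=1}^n k_i(x)\,w_i$ with $|k_i(x)|\le\ell$ chosen so that $A\bigl(u_x+\sum k_i(x)\,w_i\bigr)$ is $\delta$-close to $\Z^n$. The vector $v_x:=u_x+\sum k_i(x)\,w_i$ then lies in $\mathcal N_\varep\cap\Lambda_\Q$ (by Remark~\ref{arithProg2}), is within bounded distance of $x$, and has $Av_x$ within $\delta$ of $\Z^n$ with $(Av_x)_i\in\Z$ for $i\in I_\Q(A)$. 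The family $\widetilde{\mathcal N}_{\varep,\delta}=\{v_x:x\in\R^n\}$ is thus relatively dense and satisfies all requirements. The main technical difficulty is to guarantee that the correction step is feasible for every $x$: the grid $\{\sum k_i Aw_i\bmod\Z^n:|k_i|\le\ell\}$ must be $\delta$-dense in the sub-torus in which $Au_x\bmod\Z^n$ varies. This density is ensured by taking $\ell$ large enough (and $\varep''$ correspondingly small), using the equidistribution of the orbits $A\Z^n\bmod\Z^n$ in the relevant closed subgroup of the torus; balancing these parameters against the constraint $\varep''=\varep/(n(2\ell+1))$ is the delicate point of the argument.
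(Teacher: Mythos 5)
Your treatment of the rational rows and your opening use of the pigeonhole principle are in the spirit of the paper, but the final correction step has a genuine gap that I do not think can be repaired as written. You need, for \emph{every} base point $u_x$, a bounded integer combination with $A\bigl(\sum_i k_i w_i\bigr)\equiv -Au_x \pmod{\Z^n}$ up to $\delta$; since the residues $Au_x \bmod \Z^n$ may be dense in the whole torus (or in a positive-dimensional subtorus), this forces the grid $\{\sum_i k_i Aw_i \bmod \Z^n : |k_i|\le\ell\}$ to be $\delta$-dense there. But your $w_i$ were produced by pigeonhole precisely so that $Aw_i \bmod \Z^n$ has norm $<\delta$, with \emph{no lower bound}: these residues could be arbitrarily small, or even zero, in which case no choice of $\ell$ makes the grid dense. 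Worse, the $\ell$ you would need depends on how small these residues turn out to be, while the residues themselves depend on $\varep''=\varep/(n(2\ell+1))$, i.e.\ on $\ell$ --- the circular dependency you flag as ``delicate'' is in fact unresolved, and equidistribution of $A\Z^n$ modulo $\Z^n$ does not break it, because it says nothing about the particular vectors the pigeonhole hands you. (The claim that iterating the pigeonhole ``in transverse directions'' yields suitable $w_1,\dots,w_n$ is also unsupported: linear independence of the $w_i$ in $\R^n$ does not control the images $Aw_i\bmod\Z^n$.)

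The paper avoids a global correction grid altogether by tailoring the correction to each $v$. Set $\ell=\lceil 1/\delta\rceil^n$ and $\varep'=\varep/\ell$. First, since $\bigcup_{R>0}\{Av\bmod\Z^n : v\in\mathcal N_{\varep'}\cap B_R\}$ exhausts $\{Av\bmod\Z^n : v\in\mathcal N_{\varep'}\}$, there is $R_0$ such that every $v\in\mathcal N_{\varep'}$ admits a $v'\in\mathcal N_{\varep'}\cap B_{R_0}$ with $d_\infty(Av-Av',\Z^n)<\delta$. Second --- and this is the step your argument is missing --- the pigeonhole applied to the $\ell+1$ multiples $A(kv')$ of this \emph{same} $v'$ produces $1\le k_{v'}\le\ell$ with $d_\infty(A(k_{v'}v'),\Z^n)\le\delta$, so that $v''=(k_{v'}-1)v'$ satisfies $Av''\equiv -Av'$ up to $\delta$ modulo $\Z^n$, hence $d_\infty(A(v+v''),\Z^n)\le 2\delta$. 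The correction $v''$ lies in $B_{\ell R_0}$ and in $\mathcal N_\varep$ by Remark~\ref{arithProg2}, so the sums $v+v''$ form the desired relatively dense set. I would rework your proof along these lines: apply the pigeonhole to the multiples of a single short representative of each residue class, rather than trying to build one correction lattice that serves all residues at once.
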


The second lemma states that in the irrational case, we have continuity of the density under perturbations by translations.

\begin{lemme}\label{équi}
Let $\varep>0$ and $A\in GL_n(\R)$. Then there exists $\delta>0$ and $R_0>0$ such that for all $w\in B_\infty(0,\delta)$ (such that for every $i\in I_\Q(A)$, $w_i=0$), and for all $R\ge R_0$, we have
\[D_R^+\big(\pi(A\Z^n) \Delta \pi(A\Z^n+w) \big) \le \varep.\]
\end{lemme}

\begin{rem}
The assumption ``for every $i\in I_\Q(A)$, $v_i=0$'' is necessary to obtain the conclusion of the lemma (see \cite[Section 8.1]{Guih-These}).
\end{rem}

We begin by the proofs of both lemmas, and prove Theorem~\ref{imgquasi} thereafter.

\begin{proof}[Proof of Lemma \ref{tiroir}]
Let us begin by giving the main ideas of the proof of this lemma. For $R_0$ large enough, the set of remainders modulo $\Z^n$ of vectors $Av$, where $v$ is a $\varep$-translation of $\Gamma$ belonging to $B_{R_0}$, is close to the set of remainders modulo $\Z^n$ of vectors $Av$, where $v$ is any $\varep$-translation of $\Gamma$. Moreover (by the pigeonhole principle), there exists an integer $k_0$ such that for each $\varep$-translation $v\in B_{R_0}$, there exists $k\le k_0$ such that $A(k v)$ is close to $\Z^n$. Thus, for every $\varep$-translation $v$ of $\Gamma$, there exists a $(k_0-1)\varep$-translation $v' = (k-1)v$, belonging to $B_{k_0 R_0}$, such that $A(v+v')$ is close to $\Z^n$. The vector $v+v'$ is then a $k_0\varep$-translation of $\Gamma$ (by additivity of the translations) whose distance to $v$ is smaller than $k_0 R_0$.
\bigskip

We now formalize these remarks. Let $\Gamma$ be an almost periodic pattern, $\varep>0$ and $A\in GL_n(\R)$. First of all, we apply the pigeonhole principle. We partition the torus $\R^n/\Z^n$ into squares whose sides are smaller than $\delta$; we can suppose that there are at most  $\lceil 1/\delta\rceil^n$ such squares. For $v\in \R^n$, we consider the family of vectors $\{A(kv)\}_{0\le k\le \lceil 1/\delta\rceil^n}$ modulo $\Z^n$. By the pigeonhole principle, at least two of these vectors, say $A(k_1v)$ and $A(k_2v)$, with $k_1<k_2$, lie in the same small square of $\R^n/\Z^n$. Thus, if we set $k_v = k_2-k_1$ and $\ell = \lceil 1/\delta\rceil^n$, we have
\begin{equation}\label{eqdistZ}
1\le k_v\le \ell \quad \text{and} \quad d_\infty\big(A(k_vv),\Z^n\big)\le\delta.
\end{equation}
To obtain the conclusion in the rational case, we suppose in addition that $v\in q\Z^n$, where $q\in\N^*$ is such that for every $i\in I_\Q(A)$ and every $1\le j\le n$, we have $q\, a_{i,j}\in\Z$ (which is possible by Remark~\ref{arithProg2}).

We set $\varep'=\varep/\ell$. By the definition of an almost periodic pattern, there exists $R_{\varep'}>0$ and a relatively dense set ${\mathcal N}_{\varep'}$ such that Equation \eqref{EqAlmPer} holds for the parameter $\varep'$:
\begin{equation}\label{EqAlmPer3}\tag{\ref{EqAlmPer}'}
\forall R\ge R_{\varep'},\  \forall v\in\mathcal N_{\varep'},\  D_R^+\big( (\Gamma+v)\Delta \Gamma \big) <\varep',
\end{equation}

We now set
\[P = \big\{Av\operatorname{mod} \Z^n \mid v\in {\mathcal N}_{\varep'}\big\} \quad \text{and} \quad P_R = \big\{Av\operatorname{mod} \Z^n \mid v\in \mathcal N_{\varep'}\cap B_R\big\}.\]
We have $\bigcup_{R>0} P_R = P$, so there exists $R_0>R_{\varep'}$ such that $d_H(P,P_{R_0})<\delta$ (where $d_H$\index{$d_H$} denotes Hausdorff distance). Thus, for every $v\in\mathcal N_{\varep'}$, there exists $v'\in \mathcal N_{\varep'}\cap B_{R_0}$ such that
\begin{equation}\label{eq666}
d_\infty(Av-Av',\Z^n)<\delta.
\end{equation}

We then remark that for every $v'\in {\mathcal N}_{\varep'}\cap B_{R_0}$, if we set $v'' = (k_{v'}-1)v'$, then by Equation \eqref{eqdistZ}, we have
\[d_\infty(Av' + Av'',\Z^n) = d_\infty\big(A(k_{v'}v'),\Z^n\big)\le\delta.\]
Combining this with Equation~\eqref{eq666}, we get
\[d_\infty(Av + Av'',\Z^n)\le 2\delta,\]
with $v''\in B_{\ell R_0}$. 

On the other hand, $k_{v'}\le \ell$ and Equation \eqref{EqAlmPer3} holds, so Lemma \ref{arithProg} (more precisely, Remark~\ref{arithProg2}) implies that $v''\in \mathcal N_\varep$, that is
\[\forall R\ge R_{\varep'},\  D_{R}^+\big( (\Gamma+ v'')\Delta \Gamma \big) <\varep.\]

In other words, for every $v\in\mathcal N_{\varep'}$, there exists $v''\in \mathcal N_\varep \cap B_{\ell R_0}$ (with $\ell$ and $R_0$ independent from $v$) such that $d_\infty\big(A(v+v''),\Z^n\big)<2\delta$. The set $\widetilde{\mathcal N}_{2\varep,2\delta}$ we look for is then the set of such sums $v+v''$.
\end{proof}

\begin{proof}[Proof of Lemma \ref{équi}]
Under the hypothesis of the lemma, for every $i\notin I_\Q(A)$, the sets
\[\left\{\sum_{j=1}^n a_{i,j} x_j\mid (x_j)\in\Z^n\right\},\]
are equidistributed modulo $\Z$. Thus, for all $\varep>0$, there exists $R_0>0$ such that for every $R\ge R_0$,
\[D_R^+\big\{v\in\Z^n \,\big|\, \exists i\notin I_\Q(A) : d\big((Av)_i,\Z+\frac12\big)\le \varep\big\} \le 2(n+1)\varep.\]
As a consequence, for all $w\in\R^n$ such that $\|w\|_\infty\le\varep/(2(n+1))$ and that $w_i=0$ for every $i\in I_\Q(A)$, we have
\[D_R^+\big(\pi(A\Z^n) \Delta \pi(A(\Z^n+w))\big)\le\varep.\]
Then, the lemma follows from the fact that there exists $\delta>0$ such that $\|A(w)\|_\infty\le \varep/(2(n+1))$ as soon as $\|w\|\le\delta$.
\end{proof}

\begin{proof}[Proof of Theorem \ref{imgquasi}]
Let $\varep>0$. Lemma \ref{équi} gives us a corresponding $\delta>0$, that we use to apply Lemma \ref{tiroir} and get a set of translations $\widetilde{\mathcal N}_{\varep,\delta}$. Then, for every $v\in \widetilde{\mathcal N}_{\varep,\delta}$, we write $\pi(Av) = Av + \big(\pi(Av)-Av\big) = Av + w$. The conclusions of Lemma~\ref{tiroir} imply that $\|w\|_\infty <\delta$, and that $w_i=0$ for every $i\in I_\Q(A)$.

We now explain why $\hat Av = \pi(Av)$ is a $\varep$-translation for the set $\widehat A(\Gamma)$. Indeed, for every $R\ge \max(R_{\varep,\delta},MR_0)$, where $M$ is the maximum of the greatest modulus of the eigenvalues of $A$ and of the greatest modulus of the eigenvalues of $A^{-1}$, we have
\begin{align*}
D^+_R \Big(\pi(A\Gamma) \Delta \big(\pi(A \Gamma)+\widehat Av\big)\Big) \le &\ D^+_R \Big(\pi(A\Gamma) \Delta \big(\pi(A \Gamma)+w\big)\Big)\\
                  & + D^+_R \Big(\big(\pi(A\Gamma) + w\big) \Delta \big(\pi(A \Gamma)+\widehat Av\big)\Big)
\end{align*}
(where $w=\pi(Av)-Av$). By Lemma \ref{équi}, the first term is smaller than $\varep$. For its part, the second term is smaller than
\[D^+_R\big((A\Gamma + Av) \Delta A \Gamma\big) \le M^2 D^+_{RM}\big((\Gamma + v) \Delta \Gamma\big),\]
which is smaller than $\varep$ because $v\in\mathcal N_\varep$.
\end{proof}

\bibliographystyle{amsalpha}
\bibliography{../../Biblio}

\end{document}